\let\div\undefined
\DeclareMathOperator{\div}{div}
\DeclareMathOperator{\curl}{curl}
\DeclareMathOperator{\grad}{grad}
\newcommand{\du}{\,\mathrm{d}}
\newcommand{\R}{\mathbb{R}}
\newcommand{\N}{\mathbb{N}}
\newcommand{\xb}{{\mathbf{x}}}
\newcommand{\vb}{{\mathbf{v}}}
\newcommand{\Xb}{{\mathbf{X}}}
\newcommand{\Vb}{{\mathbf{V}}}
\newcommand{\ub}{\tilde{\mathbf{u}}}
\newcommand{\eb}{{\mathbf{e}}}
\newcommand{\Eb}{{\mathbf{E}}}
\newcommand{\Bb}{{\mathbf{B}}}
\newcommand{\Db}{{\mathbf{D}}}
\newcommand{\Hb}{{\mathbf{H}}}
\newcommand{\Ab}{{\mathbf{A}}}
\newcommand{\bb}{{\mathbf{b}}}
\newcommand{\bt}{\mathbf{t}}
\newcommand{\bn}{\mathbf{n}}
\newcommand{\rhob}{{\boldsymbol{\rho}}}
\newcommand{\xib}{{\boldsymbol{\xi}}}
\newcommand{\Xib}{{\boldsymbol{\Xi}}}
\newcommand{\tub}{\tilde{\mathbf{u}}}
\newcommand{\teb}{{\tilde{\mathbf{e}}}}
\newcommand{\te}{{\tilde{e}}}
\newcommand{\tEb}{{\tilde{\mathbf{E}}}}
\newcommand{\tBb}{{\tilde{\mathbf{B}}}}
\newcommand{\tPhi}{{\tilde{\Phi}}}
\newcommand{\tbb}{{\tilde{\mathbf{b}}}}
\newcommand{\tb}{{\tilde{b}}}
\newcommand{\tB}{{\tilde{B}}}
\newcommand{\trhob}{{\tilde{\boldsymbol{\rho}}}}
\newcommand{\tf}{\tilde{f}}
\newcommand{\tJb}{\tilde{\mathbf{J}}}
\newcommand{\Jb}{\mathbf{J}}
\newcommand{\BB}{{\ensuremath{\mathbb{B}}}}
\newcommand{\MM}{\ensuremath{\mathbb{M}}}
\newcommand{\C}{\ensuremath{\mathsf{C}}}
\newcommand{\D}{\ensuremath{\mathsf{D}}}
\newcommand{\G}{\ensuremath{\mathsf{G}}}
\newcommand{\tBB}{\ensuremath\tilde{{\mathbb{B}}}}
\newcommand{\tM}{\ensuremath\tilde{\mathsf{M}}}
\newcommand{\NN}{\ensuremath{\mathbb{N}}}
\newcommand{\tom}{\ensuremath{\tilde{\Omega}}}
\newcommand{\tLa}{\ensuremath{\tilde{\Lambda}}}
\newcommand{\tLam}{\ensuremath{\tilde{\Lambda}}}
\newcommand{\tLab}{\ensuremath{\tilde{\boldsymbol{\Lambda}}}}
\newcommand{\tLaB}{\ensuremath{\tilde{\mathbf{\Lambda}}}}
\newcommand{\tLaBB}{\ensuremath{\tilde{\mathbb{\Lambda}}}}
\newcommand{\LaB}{\ensuremath{\mathbf{\Lambda}}}
\newcommand{\LaBB}{\ensuremath{\mathbb{\Lambda}}}
\newtheorem{theorem}{Theorem}[section]
\newtheorem{mydef}[theorem]{Definition}
\newtheorem{prop}[theorem]{Proposition}
\newtheorem{lemma}[theorem]{Lemma}
\newtheorem{cor}[theorem]{Corrolary}
\newtheorem{remark}[theorem]{Remark}
\numberwithin{equation}{section}
\title{Geometric Particle-in-Cell Simulations of the Vlasov--Maxwell System in Curvilinear Coordinates\thanks {\textbf{Funding}: This work has been carried out within the framework of the EUROfusion Consortium and has received funding from the Euratom research and training programme 2014-2018 and 2019-2020 under grant agreement No 633053. The views and opinions expressed herein do not necessarily reflect those of the European Commission. B.~P.~has also been
supported by Deutsche Forschungsgemeinschaft (DFG) through the TUM International Graduate School
of Science and Engineering (IGSSE).}}
\author[1,2]{Benedikt Perse}
\author[1,2]{Katharina Kormann}
\author[1,2]{Eric Sonnendr{\"u}cker}
\affil[1]{Max Planck Institute for Plasma Physics, Boltzmannstr.~2, 85748 Garching, Germany (\texttt{\{benedikt.perse,katharina.kormann,eric.sonnendruecker\}@ipp.mpg.de}).}
\affil[2]{Department of Mathematics, Technical University of Munich, Boltzmannstr.~3, 85748 Garching, Germany.}
\begin{document}
\maketitle

\begin{abstract}
Numerical schemes that preserve the structure of the kinetic equations can provide stable simulation results over a long time. An electromagnetic particle-in-cell solver for the Vlasov--Maxwell equations that preserves at the discrete level the non-canonical Hamiltonian structure of the Vlasov--Maxwell equations has been presented in [Kraus et al. 2017]. Whereas the original formulation has been obtained for Cartesian coordinates, we extend the formulation to curvilinear coordinates in this paper. For the discretisation in time, we discuss several (semi-)implicit methods either based on a Hamiltonian splitting or a discrete gradient method combined with an antisymmetric splitting of the Poisson matrix and discuss their conservation properties and computational efficiency.
\end{abstract}
\textit{Keywords:} Vlasov--Maxwell, particle-in-cell, curvilinear coordinates, finite element exterior calculus, geometric numerical methods

\section{Introduction}

Particle-in-cell (PIC) simulations of the Vlasov--Maxwell system are an important tool to understand the evolution of a plasma in its self-consistent field. Structure-preserving PIC discretisations of the Vlasov--Maxwell system have been an active area of research in recent years (see the review \cite{Morrison2017} and references therein). The conservation properties of the Vlasov--Maxwell system can be related to the symmetry in its variational or Hamiltonian structure. Squire, Qin \& Tang \cite{Squire:2012} have derived a fully discrete geometric PIC method for the Vlasov--Maxwell system based on a discrete action principle applied to Low's Lagrangian \cite{Low:1958}. Based on this work, a systematic framework, called the geometric electromagnetic particle-in-cell method (GEMPIC), has been presented by Kraus, Kormann, Morrison \& Sonnendr\"ucker \cite{kraus2016gempic}, where the fields are discretised based on finite element exterior calculus \cite{arnold2006finite} and the equations of motions are derived from a discretisation of the Morrison--Weinstein bracket \cite{morrison1980maxwell,weinstein1981comments}. A similar formulation has been obtained simultaneously by He, Sun, Qin \&  Liu \cite{he2016}.

 So far, the work on geometric methods for the Vlasov equation has been limited to Cartesian geometry. However, in magnetic fusion, the geometry of the problem is usually given by a tokamak or stellerator device. This motivates the research reported in this paper that aims at an extension of the GEMPIC framework to curvilinear coordinates.

A number of curvilinear PIC codes has been proposed in the literature. The first electromagnetic PIC code for non-orthogonal grids was already presented by Eastwood, Arter, Brealey \& Hockney \cite{eastwood1995body} in 1995 based on a finite element description of the fields and a particle pusher in logical coordinates. This method is charge-conserving. Following up on this work, Wang, Kondrashov, Liewer \& Karmesin \cite{wang1999three} presented a 3D3V electromagnetic particle-in-cell code, called EMPIC, with a finite volume discretisation of Maxwell's equations on a deformable grid. The particles are pushed with a hybrid pusher, i.e.~the position in configuration space is updated in logical coordinates while the velocity is updated in physical coordinates. Fitchtl, Finn \& Cartwright \cite{fichtl2012arbitrary}, on the other hand, proposed an electrostatic PIC code in 2D2V with a particle pusher that uses again logical coordinates for the whole phase-space. The field solver is based on finite differences and the model preserves momentum. Delzanno et al.~use a hybrid particle pusher in their electrostatic curvilinear code CPIC \cite{delzanno2013cpic}. Their field solver is based on finite differences and allows for mesh refinement.

In a more recent work \cite{chacon2016curvilinear}, Chacon \& Chen proposed a curvilinear electromagnetic 2D3V PIC code  that conserves both charge and energy for the Vlasov--Darwin system. The algorithm is fully implicit and based on finite differences for the fields and a hybrid particle pusher. In order to speed up the non-linear iterations, the authors propose a fluid preconditioner of the system. Numerical experiments are shown for simulations on a Colella mesh with periodic boundaries.

The outline of this paper is as follows: In the next section, we review the Vlasov--Maxwell system and introduce our notation for the curvilinear coordinates. Section \ref{sec:discretisation} introduces the structure-preserving semi-discretisation based on the transformation of the de Rham complex from logical to physical coordinates. The structure of this semi-discretisation is analysed in Section \ref{sec:structure}. Various options for the discretisation of the time variable are discussed in Section \ref{sec:time} and numerical experiments that confirm the good conservation properties of our methods are shown in Section \ref{sec:numerics}, followed by some concluding remarks in Section \ref{sec:conclusions}.

\section{The Vlasov--Maxwell system and curvilinear coordinates}
The Vlasov equation in physical phase-space coordinates $(\xb,\vb)$ for a species $s$ with charge $q_s$ and mass $m_s$ reads
\begin{align}\label{V}
&\frac{\partial f_s(\xb,\vb,t)}{\partial t}+ \vb \cdot \nabla_{\xb} f_s(\xb,\vb,t)+\frac{q_s}{m_s} (\Eb(\xb,t)+\vb\times \Bb(\xb,t))\cdot \nabla_\vb f_s(\xb,\vb,t)=0,
\end{align}
where $\Eb$ and $\Bb$ denote the electromagnetic fields, which are evolved according to Maxwell's equations.
The system couples through the first two moments of the particle distribution function $f_s$,  the charge and current density,
\begin{align*}
\rhob(\xb,t)=\sum_s q_s \int f_s(\xb,\vb,t) \du\vb, \ \mathbf{J}(\xb,t)=\sum_s q_s \int f_s(\xb,\vb,t) \vb \du\vb.
\end{align*}

The equations of motion can be obtained by a bilinear, antisymmetric Poisson bracket that satisfies Leibniz' rule and the Jacobi identity and was introduced in \cite{morrison1980maxwell} and corrected in \cite{weinstein1981comments}. It is defined as
\begin{align*}
\{\mathcal{F},\mathcal{G}\}[f_s,\mathbf{E},\mathbf{B}]&= \sum_s \int \left[\frac{\delta \mathcal{F}}{\delta f_s},\frac{\delta \mathcal{G}}{\delta f_s}\right] \du\xb\vb\\
&+\sum_s \frac{q_s}{m_s} \int f_s\left(\nabla_{\vb} \frac{\delta \mathcal{F}}{\delta f_s} \cdot \frac{\delta \mathcal{G}}{\delta \Eb}-\nabla_{\vb} \frac{\delta \mathcal{G}}{\delta f_s} \cdot \frac{\delta \mathcal{F}}{\delta \mathbf{E}}\right)\du\xb\vb\\
&+ \sum_s \frac{q_s}{m_s^2} \int f_s \mathbf{B} \cdot \left( \nabla_{\vb} \frac{\delta \mathcal{F}}{\delta f_s} \times \nabla_{\vb} \frac{\delta \mathcal{G}}{\delta f_s} \right)\du\xb\vb\\
&+ \int \left(\operatorname{curl} \frac{\delta \mathcal{F}}{\delta\mathbf{E}} \cdot \frac{\delta \mathcal{G}}{\delta\mathbf{B}}-\operatorname{curl} \frac{\delta \mathcal{G}}{\delta\mathbf{E}} \cdot \frac{\delta \mathcal{F}}{\delta\mathbf{B}} \right) \du\xb,
\end{align*}
where $[f,g]=\nabla_\xb f \cdot \nabla_\vb g-\nabla_\xb g \cdot \nabla_\vb f.$

The time evolution of a functional  $\mathcal{F}[f_s,\mathbf{E,B}]$ is given by
\begin{align} \label{timeevolution}
\frac{\du}{\du t}\mathcal{F}[f_s,\mathbf{E,B}]=\{\mathcal{F},\mathcal{H}\},
\end{align} 
where the Hamiltonian $\mathcal{H}$ is given by the sum of the kinetic energy of the particles and the electric and magnetic field energies,

\begin{align} \label{Hamiltonian}
\mathcal{H}=\sum_s \frac{m_s}{2} \int |\vb|^2 f_s(\mathbf{x,v}) \ \du\xb \du\vb+\frac{1}{2}\int \left( |\mathbf{E}(\xb)|^2+|\mathbf{B}(\xb)|^2\right) \du\xb.
\end{align}

\subsection{Differential forms and the structure of Maxwell's equations}
The structure of Maxwell's equations can be understood by interpreting the fields as differential forms following \cite{bossavitdifferential,bachman2012geometric, hiptmair2002finite}.
Therefore, we first reformulate Maxwell's equations in terms of the electric field ($\Eb$), the magnetic field ($\Hb$), the displacement field ($\Db$), and the magnetic induction ($\Bb$),
\begin{align*} 
&\frac{\partial \Db(\xb,t)}{\partial t} = \nabla_{\xb} \times \Hb(\xb,t) - \Jb(\xb,t), 
&&\frac{\partial  \Bb(\xb,t)}{\partial t}=-\nabla_{\xb} \times \Eb(\xb,t),
\\ 
&\nabla_{\xb} \cdot \Db(\xb,t)=\rho(\xb,t),
&&\nabla_{\xb} \cdot  \Bb(\xb,t)=0.
\end{align*} 
The spaces of electromagnetics form a de Rham complex, which in terms of Sobolev spaces can be expressed as
\begin{equation}\label{CGL_g}
\begin{tikzpicture}[baseline=(current  bounding  box.center)]
  \matrix (m) [matrix of math nodes,row sep=3em,column sep=4em,minimum width=2em] {
     H^1(\Omega) & H(\curl,\Omega) & H(\div,\Omega) & L^2(\Omega)
    \\
    };
  \path[-stealth]
    (m-1-1) edge node [above] {$\grad$} (m-1-2)
    (m-1-2) edge node [above] {$\curl$} (m-1-3)
    (m-1-3) edge node [above] {$\div$}  (m-1-4)
     ;
\end{tikzpicture}
\end{equation}
and is accompanied by the dual de Rham complex
\begin{equation}\label{CGL_g_dual}
\begin{tikzpicture}[baseline=(current  bounding  box.center)]
  \matrix (m) [matrix of math nodes,row sep=3em,column sep=4em,minimum width=2em] {
     {L^2}^\star(\Omega) & {H}^\star(\div,\Omega) & {H}^\star(\curl,\Omega) & {H^1}^\star(\Omega)
    \\
    };
  \path[-stealth]
    (m-1-1) edge node [above] {$\grad$} (m-1-2)
    (m-1-2) edge node [above] {$\curl$} (m-1-3)
    (m-1-3) edge node [above] {$\div$}  (m-1-4)
     ;
\end{tikzpicture}
\end{equation}


The complex property is satisfied, since $ \curl \grad \Phi=0$ for all $\Phi \in H^1(\Omega)$ and $ \div  \curl \Ab =0$ for all $\Ab \in  H(\curl,\Omega)$.

In the interpretation of the fields, there are two options, one yielding a strong formulation of Amp\`ere's law and the other yielding a weak formulation. We choose the latter and consider $\Bb\in H(\div,\Omega)$ as primal 2-form, $\Eb\in H(\curl,\Omega)$ as primal 1-form, $\Hb \in {H}^\star(\curl,\Omega) $ as dual 1-form, $\Db, \Jb \in {H}^\star(\div,\Omega) $ as dual 2-form and $\rho \in {L^2}^\star(\Omega)$ as dual 3-form. 
Hence, Amp\`ere's law and the electric Gauss law are interpreted in the weak sense with $\Eb$ representing $\Db$ and $\Bb$ representing $\Hb$. From now on, we will consider Maxwell's equations in the following mixed form. Here, we obtain for all $\boldsymbol{\varphi} \in H(\curl,\Omega), \psi \in H^1(\Omega)$

\begin{align*}
\int_{\Omega}  \boldsymbol{\varphi}(\xb,t) \cdot \frac{\partial \Eb(\xb,t)}{\partial t} \du \xb &= \int_{\Omega}  \nabla_{\xb} \times \boldsymbol{\varphi}(\xb,t)  \cdot \Bb(\xb,t) \du \xb -   \Jb(\boldsymbol{\varphi})(t)
\\ 
\frac{\partial  \Bb(\xb,t)}{\partial t}&=-\nabla_{\xb} \times \Eb(\xb,t),
\\
-\int_{\Omega} \nabla_{\xb} \psi(\xb,t) \cdot   \Eb(\xb,t)&= \rho( \psi)(t),
\\ 
\nabla_{\xb} \cdot  \Bb(\xb,t) &=0,
\end{align*} 
where $\Jb$ and $\rho$ are  linear functionals. Note that we have assumed that the boundary terms vanish.

\subsection{Curvilinear coordinates}
\subsubsection{Notation}
Let us first define the notation for the curvilinear coordinates before discussing how these can be consistently combined with differential forms. We consider a bijective coordinate transformation from the logical space $[0,1]^3$ to the physical space $\Omega$. The transformation map is denoted by 
\begin{align*}
F \colon [0,1]^3=:\tom \to \Omega \subset \R^3, \
F(\xib)=\xb
\end{align*}
and $\xib=(\xi_1,\, \xi_2,\,\xi_3)^\top$ and $\xb=(
x_1,\, x_2,\, x_3)^\top$ are the variables on the logical and physical mesh, respectively. 
The matrix of the partial derivatives, the Jacobi matrix, and its determinant, the Jacobian, are defined as
\begin{align*}
\left(DF(\xib)\right)_{ij}&=\frac{\partial F_i(\xib)}{\partial \xi_j}=\frac{\partial x_i}{\partial \xi_j}, \quad
J_F(\xib)=\det(DF(\xib)).
\end{align*}
We assume that the mapping is non-singular, i.e.~$J_F(\xib)\neq 0$ for all $\xib\in \tom$, and therefore, the Jacobi matrix is invertible.
\begin{mydef}
The column vectors of the Jacobi matrix form the so called covariant basis of the tangent space 
\begin{align*}
\bt_i=\frac{\partial F(\xib)}{\partial \xi_i}=\frac{\partial \xb}{\partial \xi_i}, \quad DF=(\bt_1|\bt_2|\bt_3)
\end{align*}
whereas the columns of the transposed inverse Jacobi matrix form the dual basis, which is called the contravariant basis of the cotangent space,
\begin{align*}
DF(\xib)^{-\top}=: N(\xib)=(\bn_1|\bn_2|\bn_3).
\end{align*} 
\end{mydef}

\begin{mydef}
The coefficients of the metric $G_t$ and the inverse metric $G_n$ are defined in the following symmetric way
\begin{align*}
G_t(\xib)=DF(\xib)^\top DF(\xib), \quad G_n(\xib) = N(\xib)^\top N(\xib).
\end{align*}
\end{mydef}

\subsubsection{Transformation of differential forms}
We introduce curvilinear coordinates to the differential forms and show how they are transformed in a consistent way as can also be seen in \cite{kreeft2011mimetic}.

\begin{mydef}
For a scalar differential 0-form, $ g\in H^1(\Omega)$, we define $\tilde{g}\in H^1(\tom)$ as
\begin{align}\label{0form}
\tilde g(\xib):= g(F(\xib))=g(\xb).
\end{align}
\end{mydef}
Next, we consider the transformation of the other differential forms.
\begin{prop}\label{derivtrafo} 
\begin{enumerate}
\item A vector function, $\Eb\in H(\curl,\Omega)$, corresponding to a differential 1-form, is transformed by the covariant Piola transform
\begin{align}\label{PiolaE}
\Eb(\xb)=N(\xib) \tEb(\xib)\text{ with } \tEb \in H(\curl,\tom).
\end{align}
\item A vector function,  $\Bb\in H(\div,\Omega)$, corresponding to a differential 2-form, is transformed by the contravariant Piola transform
\begin{align}\label{PiolaB}
\Bb(\xb)=\frac{DF(\xib)}{J_F(\xib)}\tBb(\xib) \text{ with } \tBb \in H(\div,\tom).
\end{align}
\item A scalar differential 3-form, $h \in L^2(\Omega)$, is related to $\tilde h \in L^2(\tom)$ via
\begin{align*}
h(\xb)=\frac{1}{J_F(\xib)}\tilde{h}(\xib).
\end{align*}  
\end{enumerate}
\end{prop}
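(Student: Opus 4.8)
The plan is to treat all the fields as proxies for differential forms and to \emph{define} the transformed objects on $\tom$ as the pullbacks of the corresponding forms under $F$; the proposition then amounts to re-expressing these pullbacks in terms of proxy vector fields, essentially reproducing the computation of \cite{kreeft2011mimetic}. Concretely, using the Euclidean metric on $\Omega$ I identify a function $g$ with a $0$-form, a vector field $\Eb=(E_1,E_2,E_3)$ with the $1$-form $\sum_i E_i\,\du x_i$, a vector field $\Bb$ with the $2$-form $B_1\,\du x_2\wedge\du x_3+B_2\,\du x_3\wedge\du x_1+B_3\,\du x_1\wedge\du x_2$, and a scalar $h$ with the $3$-form $h\,\du x_1\wedge\du x_2\wedge\du x_3$. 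The $0$-form relation \eqref{0form} is just $\tilde g=g\circ F$ by definition of the pullback of functions, so only the $1$-, $2$- and $3$-form statements require work.

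For the $1$-form I compute $F^\star\!\big(\sum_i E_i\,\du x_i\big)=\sum_i (E_i\circ F)\,\du F_i=\sum_j\big(\sum_i \tfrac{\partial F_i}{\partial\xi_j}(E_i\circ F)\big)\du\xi_j$, so the proxy of the pulled-back $1$-form is $\tEb=DF^\top(\Eb\circ F)$; multiplying by $DF^{-\top}=N$ gives $\Eb(\xb)=N(\xib)\tEb(\xib)$, which is \eqref{PiolaE}. For the $3$-form, $F^\star\!\big(h\,\du x_1\wedge\du x_2\wedge\du x_3\big)=(h\circ F)\,\du F_1\wedge\du F_2\wedge\du F_3=(h\circ F)\,J_F\,\du\xi_1\wedge\du\xi_2\wedge\du\xi_3$, i.e.~$\tilde h=J_F\,(h\circ F)$, the claimed relation. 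For the $2$-form I expand each $\du F_i\wedge\du F_j$ and collect the coefficients of $\du\xi_a\wedge\du\xi_b$; the resulting coefficient matrix is built from the $2\times2$ minors of $DF$, i.e.~the adjugate, and the identity $\operatorname{adj}(DF)=J_F\,DF^{-1}$ gives $\tBb=J_F\,DF^{-1}(\Bb\circ F)$, hence after inversion $\Bb(\xb)=\frac{DF(\xib)}{J_F(\xib)}\tBb(\xib)$, which is \eqref{PiolaB}.

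It then remains to check that these transforms map the Sobolev spaces of \eqref{CGL_g} into the corresponding spaces on $\tom$. Two ingredients suffice. First, pullback commutes with the exterior derivative, $F^\star\du=\du F^\star$; translated through the identifications above, this says that the covariant Piola transform intertwines $\grad$ with $\grad$, the contravariant Piola transform intertwines $\curl$ with $\curl$, and the $3$-form scaling intertwines $\div$ with $\div$ — equivalently $\curl\tEb=J_F\,DF^{-1}((\curl\Eb)\circ F)$ and $\div\tBb=J_F((\div\Bb)\circ F)$. Second, since $F$ is a bijection with $J_F$ and $J_F^{-1}$ bounded and $DF$, $N$ bounded, the substitution $u\mapsto u\circ F$ is an isomorphism of $L^2$ spaces componentwise. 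Combining them: if $\Eb\in H(\curl,\Omega)$ then $\curl\Eb\in L^2(\Omega)$, so $\curl\tEb$ coincides with the $L^2(\tom)$ field $J_F\,DF^{-1}((\curl\Eb)\circ F)$ and hence $\tEb\in H(\curl,\tom)$; the $H(\div,\tom)$ and $L^2(\tom)$ statements follow in exactly the same way.

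The step I expect to be most delicate is the $2$-form computation: one must keep the wedge-product orientations consistent and verify, in the precise index/sign convention fixed by \eqref{PiolaB}, that the matrix of minors of $DF$ is exactly $J_F\,DF^{-1}$ (rather than its transpose), so that the prefactor $DF/J_F$ — and not $N/J_F$ or $J_F\,N$ — is produced. Everything else is bookkeeping: the $1$- and $3$-form identities are one-line calculations, and the regularity assertions follow from the commuting-with-$\du$ property once one assumes enough smoothness of $F$ (say $F\in W^{1,\infty}(\tom)$ with Lipschitz inverse, implicit in the non-singularity hypothesis already imposed) for the $L^2$-equivalence to hold.
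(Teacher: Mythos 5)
Your proposal is correct, but it takes a different route from the paper. The paper does not carry out the computation at all: it defers to \cite[sec.~1.7]{AdvancedFEM}, where the transformation rules are obtained by representing the fields through the electromagnetic potentials --- one starts from the known transformation of the scalar potential (a $0$-form, Eq.~\eqref{0form}) and of the vector potential (a $1$-form), and then reads off the rules for $\Eb$ and $\Bb$ from $\Eb=-\nabla\Phi-\partial_t\Ab$ and $\Bb=\nabla\times\Ab$ via the chain rule, so that the $2$-form rule \eqref{PiolaB} is \emph{inherited} from the $1$-form rule through the curl rather than computed from scratch. You instead perform the direct pullback computation on the proxy forms, including the adjugate identity $\operatorname{adj}(DF)=J_F\,DF^{-1}$ for the $2$-form case, and then add the Sobolev-space bookkeeping ($F^\star\du=\du F^\star$ plus boundedness of $DF$, $N$, $J_F^{\pm1}$) to justify the claims $\tEb\in H(\curl,\tom)$ and $\tBb\in H(\div,\tom)$, which the paper leaves implicit. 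Your version is self-contained and makes the sign/orientation issue in the $2$-form case explicit (you correctly identify it as the one delicate step, and your resolution is right); the potentials-based argument is shorter once the $0$- and $1$-form rules are accepted, because the compatibility of the transforms with $\grad$, $\curl$, $\div$ is built in from the start instead of being checked afterwards. Both are valid; nothing in your argument is missing or wrong, though for full rigour the identification of the distributional curl of $\tEb$ with the transformed $L^2$ field deserves one line via the weak formulation and a change of variables in the defining integral.
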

\begin{proof}
 The proof can be found in \cite[sec. 1.7]{AdvancedFEM} by considering the representation of the fields through the electromagnetic potentials. 
\end{proof}

\subsection{Curvilinear Vlasov and Maxwell's equations}

We use the curvilinear transformation rules for the differential forms in order to transform the Vlasov--Maxwell system. 

\begin{prop}
Under the coordinate transformation $F(\xib)=\xb$, 
\begin{enumerate}
\item the Vlasov equation \eqref{V} transforms to
\begin{align*}
\frac{\partial \tf_s(\xib,\vb,t)}{\partial t}+& N(\xib)^\top \vb  \cdot \nabla_{\xib} \tf_s(\xib,\vb,t)
\\+&\frac{q_s}{m_s} N(\xib) \left(\tEb(\xib,t)+( N(\xib)^\top\vb)\times \tBb(\xib,t)\right)\cdot \nabla_\vb \tf_s(\xib,\vb,t)=0;
\end{align*}
\item Faraday's and magnetic Gauss' law in strong form do not change, i.e.~
\begin{subequations}
\begin{align}\label{strongFaraday}
 \frac{\partial \tBb(\xib,t)}{\partial t}&=-  \nabla_{\xib} \times \tEb(\xib,t), 
 \\ \label{strongmGauss}
 \nabla_\xib \cdot \tBb(\xib,t) &=0; 
\end{align}
\end{subequations}
\item the weak formulation of Amp\`ere's law and Gauss' law becomes for all $\tilde{\boldsymbol{\varphi}}\in H(\operatorname{curl},\tom)$, $\tilde \psi \in H^1(\tom)$
\begin{subequations}
\begin{align}\label{weakAmpere}
&\frac{\du}{\du t}\int_{\tom}  N \tilde{\boldsymbol{\varphi}} \cdot N \tEb |J_F| \du\xib = \int_{\tom}   \frac{DF}{J_F} \nabla_\xib \times \tilde{\boldsymbol{\varphi}} \cdot \frac{DF}{J_F} \tBb |J_F| \du\xib -\tJb( N \tilde{\boldsymbol{\varphi}}), 
\\\label{weakGauss}
&-\int_{\tom} N \nabla_\xib \tilde \psi \cdot N \tEb |J_F| \du\xib =  \tilde \rho(\tilde \psi).   
\end{align}
\end{subequations}
\end{enumerate}
\end{prop}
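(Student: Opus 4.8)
The plan is to treat the three statements in two groups. Part (1) I would obtain from the chain rule together with a single algebraic identity for the cross product under a linear map; parts (2) and (3) I would get from the fact, essentially recorded in Proposition~\ref{derivtrafo}, that the covariant and contravariant Piola transforms intertwine the physical and logical de Rham complexes, combined with the change-of-variables formula $\du\xb=|J_F|\,\du\xib$. Throughout I would use that $F$ is time independent (so $\partial_t$ commutes with every transformation) and that boundary terms vanish.

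For part (1), I would set $\tf_s(\xib,\vb,t):=f_s(F(\xib),\vb,t)$, keeping the Cartesian velocity $\vb$ untransformed (a modelling choice). The chain rule gives $\nabla_\xib\tf_s=DF^\top\nabla_\xb f_s$, hence $\nabla_\xb f_s=DF^{-\top}\nabla_\xib\tf_s=N\nabla_\xib\tf_s$ and thus $\vb\cdot\nabla_\xb f_s=(N^\top\vb)\cdot\nabla_\xib\tf_s$, which is the transport term. Since $\vb$ is untouched, $\nabla_\vb\tf_s$ and $\partial_t\tf_s$ are just $\nabla_\vb f_s$ and $\partial_t f_s$ evaluated at $(F(\xib),\vb,t)$, so the only remaining work is the Lorentz force. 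Inserting the Piola transforms \eqref{PiolaE}, \eqref{PiolaB} turns $\Eb$ into $N\tEb$ immediately, and the key step is $\vb\times\Bb=N\big((N^\top\vb)\times\tBb\big)$. I would deduce this from the identity $(A\ab)\times(A\bb)=\det(A)\,A^{-\top}(\ab\times\bb)$, valid for any invertible $A\in\R^{3\times3}$: with $A=DF$, $\bb=\tBb$ and $\ab=DF^{-1}\vb=N^\top\vb$ (so $A\ab=\vb$), this reads $\vb\times(DF\tBb)=J_F\,N\big((N^\top\vb)\times\tBb\big)$; dividing by $J_F$ and using $\Bb=\tfrac{DF}{J_F}\tBb$ gives the claim. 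Summing the three terms yields the transformed Vlasov equation.

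For parts (2) and (3) I would first record (from Proposition~\ref{derivtrafo}, or equivalently the potential representation used in its proof) the commuting relations: if $\Eb=N\tEb$ then $\nabla_\xb\times\Eb=\tfrac{DF}{J_F}\big(\nabla_\xib\times\tEb\big)$; if $\Bb=\tfrac{DF}{J_F}\tBb$ then $\nabla_\xb\cdot\Bb=\tfrac1{J_F}\nabla_\xib\cdot\tBb$; and if $g=\tilde g$ as in \eqref{0form} then $\nabla_\xb g=N\nabla_\xib\tilde g$. For the strong equations, substituting into $\partial_t\Bb=-\nabla_\xb\times\Eb$ gives $\tfrac{DF}{J_F}\partial_t\tBb=-\tfrac{DF}{J_F}\nabla_\xib\times\tEb$; cancelling the invertible factor $\tfrac{DF}{J_F}$ yields \eqref{strongFaraday}, and $\nabla_\xb\cdot\Bb=0$ becomes $\tfrac1{J_F}\nabla_\xib\cdot\tBb=0$, i.e.\ \eqref{strongmGauss}. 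For the weak equations, I would change variables $\xb=F(\xib)$ in the mixed form with test functions $\boldsymbol{\varphi}=N\tilde{\boldsymbol{\varphi}}$, $\tilde{\boldsymbol{\varphi}}\in H(\curl,\tom)$, and $\psi=\tilde\psi$, $\tilde\psi\in H^1(\tom)$; the measure contributes $|J_F|$, the covariant transform turns $\Eb$ and $\nabla_\xb\psi$ into $N\tEb$ and $N\nabla_\xib\tilde\psi$, and the commuting relation turns $\nabla_\xb\times\boldsymbol{\varphi}$ into $\tfrac{DF}{J_F}\nabla_\xib\times\tilde{\boldsymbol{\varphi}}$, which paired with $\Bb=\tfrac{DF}{J_F}\tBb$ produces the right-hand side of \eqref{weakAmpere}. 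Finally I would define $\tJb$ and $\tilde\rho$ as the linear functionals obtained by pulling back $\Jb$ and $\rho$ through the same change of variables and test-function transformation, so that $\tJb(N\tilde{\boldsymbol{\varphi}})$ and $\tilde\rho(\tilde\psi)$ are exactly the transformed source terms, giving \eqref{weakAmpere}, \eqref{weakGauss}.

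I expect the main obstacle to be the algebraic step in part (1): choosing the right instance of the linear-map cross-product identity so that the Cartesian $\vb$ in $\vb\times\Bb$ is reabsorbed into the contravariant velocity $N^\top\vb$. For parts (2) and (3) the substantive input is the intertwining of the Piola transforms with $\grad$, $\curl$ and $\div$, which I would quote from the finite element exterior calculus references behind Proposition~\ref{derivtrafo} rather than reprove; a minor subtlety there is the sign of $J_F$ — the commuting relations hold with the signed Jacobian and match the stated $|J_F|$-weighted forms when $F$ is orientation preserving, otherwise one carries the sign.
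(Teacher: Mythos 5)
Your proposal is correct and follows essentially the same route as the paper, whose proof is only the one-line remark that the equations follow by inserting the coordinate transformation and using Proposition~\ref{derivtrafo}; you simply supply the details (chain rule for the transport term, the cofactor identity $(A\ab)\times(A\bb)=\det(A)A^{-\top}(\ab\times\bb)$ for the Lorentz force, and the Piola intertwining relations plus $\du\xb=|J_F|\du\xib$ for Maxwell's equations). All of these steps check out, including the reabsorption $\vb\times\Bb=N\bigl((N^\top\vb)\times\tBb\bigr)$ and your remark about the sign of $J_F$.
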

\begin{proof}
The equations can be derived by inserting the coordinate transformation into the Vlasov and Maxwell equations and using Prop.~\ref{derivtrafo}.
\end{proof}

\section{Structure-preserving discretisation in curvilinear geometry}\label{sec:discretisation}

In this section, we will introduce a particle discretisation for the distribution function and a compatible finite element discretisation of the fields, extending the discretisation proposed in \cite{kraus2016gempic} to the curvilinear case.

\subsection{Discrete particle distribution function}
In order to define the charge and current densities in Maxwell's equations, we need to define a discrete particle distribution function from the positions $\xb_p$ and velocities $\vb_p$ of the  $N_p$ particles of all species $s$. We use a definition based on $\delta$-functions in phase space,
\begin{align*}
f_h(\xb,\vb,t)=\sum_{p=1}^{N_p} \omega_p \delta(\xb-\xb_p(t))  \delta(\vb-\vb_p(t)).
\end{align*}
The $\delta$-function defines the point evaluation in a convolution with another function. Therefore, we need to scale by the inverse Jacobian when the argument is transformed. This yields the following definition of the distribution function in logical coordinates,
\begin{equation}
\begin{aligned}\label{pic}
\tf_h(\xib,\vb,t)=f_h(F(\xib),\vb,t)&=\sum_{p=1}^{N_p} \omega_p \delta(F(\xib)-F(\xib_p(t))) \delta(\vb-\vb_p(t))\\
&=\sum_{p=1}^{N_p} \omega_p \frac{\delta(\xib-\xib_p(t))}{|J_F(\xib_p)|} \delta(\vb-\vb_p(t)).
\end{aligned}
\end{equation}
Upon inserting this discrete form of the particle distribution function, the current and the charge take the following form,
\begin{subequations}
\begin{align}\label{current}
\tilde{\mathbf{J}}_h(\xib)&= \int q_s \tf_h(\xib,\vb,t)\vb \ \du\vb=  \sum_{p=1}^{N_p} q_p \omega_p \frac{\delta(\xib-\xib_p)}{|J_F(\xib_p)|}\vb_p,
\\\label{charge}
\trhob_h(\xib)&= \int q_s \tf_h(\xib,\vb,t) \ \du\vb= \sum_{p=1}^{N_p}q_p \omega_p \frac{\delta(\xib-\xib_p)}{|J_F(\xib_p)|}.
\end{align}
\end{subequations}
Note that this representation is smooth enough, since we only consider the densities in weak form.

Let us collect the logical positions of all particles and their velocities in the vectors $ \Xib:=(\xib_1,...,\xib_{N_P})^\top, \Vb:=(\vb_1,...,\vb_{N_P})^\top.$ Moreover, we use use the following notation, $\MM_m:=\operatorname{diag}(\omega_p m_p)\otimes \mathbb{I}_3, \MM_q:=\operatorname{diag}(\omega_p q_p)\otimes \mathbb{I}_3, \NN:=\operatorname{diag}(N(\xib_p)),1\leq p\leq N_p.$ 
Hence, the equations for the characteristics of the particles can be written as
\begin{equation}
\begin{aligned}\label{discreteVlasov}
\dot{\Xib}&=\NN(\Xib)^\top\Vb,
\\
\dot{\Vb}&= \MM_q \MM_m^{-1} \NN(\Xib) \left( \tEb(\Xib,t)+ (\NN(\Xib)^\top \Vb) \times \tBb(\Xib,t)\right).
\end{aligned}
\end{equation}

\subsection{Finite element discretisation}
\subsubsection{Discrete de Rham sequence}
Arnold, Falk \& Winter \cite{arnold2006finite} have developed a theoretical framework for the finite element discretisation that respects the sequence properties of the de Rham complex. The idea is to define discrete spaces that form the following commuting diagram with the continuous spaces

\begin{equation*}
\begin{tikzpicture}[baseline=(current  bounding  box.center)]
  \matrix (m) [matrix of math nodes,row sep=3em,column sep=4em,minimum width=2em] {
           H^1(\tom) & H(\curl,\tom) & H(\div,\tom) & L^2(\tom)
    \\
           \tilde{V}_0 & \tilde{V}_1 & \tilde{V}_2 & \tilde{V}_3
    \\
    };
  \path[-stealth]
    (m-1-1) edge node [left]  {$\Pi_0$} (m-2-1)
    (m-1-1) edge node [above] {$\grad$} (m-1-2)
    (m-1-2) edge node [left]  {$\Pi_1$} (m-2-2)
    (m-2-1) edge node [above] {$\grad$} (m-2-2)
    (m-1-3) edge node [left]  {$\Pi_2$} (m-2-3)
    (m-1-4) edge node [left]  {$\Pi_3$} (m-2-4)
    (m-1-2) edge node [above] {$\curl$} (m-1-3)
    (m-2-2) edge node [above] {$\curl$} (m-2-3)
    (m-1-3) edge node [above] {$\div$}  (m-1-4)
    (m-2-3) edge node [above] {$\div$}  (m-2-4)
    ;
\end{tikzpicture}
\end{equation*}
The  operators $\Pi_k$, $k=0,1,2,3$, are projecting the corresponding differential forms to the finite dimensional subspaces $\tilde{V}_k$ with dimension, 
\begin{align*}\dim \tilde{V}_k=\left\{\begin{matrix} N_k &  \text{ if } k = 0,3 \\
3 N_k &  \text{ if } k = 1,2
\end{matrix}\right., \text{ i.e.~}
\Pi_0 \tPhi=\tPhi_h \in \tilde{V}_0, \Pi_1\tEb=\tEb_h\in \tilde{V}_1, \Pi_2\tBb=\tBb_h\in \tilde{V}_2.
\end{align*}
We introduce basis functions for the finite dimensional subspaces $\tilde{V}_k$, scalar functions $\tLa^k_i$ for $k=0,3$ and vector valued functions
\begin{align*}
\tLab^k_{i,1}=(
\tLa^{k,1}_i,0,0)^\top, 
\tLab^k_{i,2}= (0,\tLa^{k,2}_i,0)^\top, 
\tLab^k_{i,3}=(0,0, \tLa^{k,3}_i)^\top \text{ for } k=1, 2.
\end{align*}
The de Rham structure can also be expressed on the level of matrices and vectors. For some $\xib \in \tom$, we collect the value of each basis function in a row vector as 
\begin{align*}
\tLam^k(\xib) &=\left( \tLa^k_{1}(\xib) ,\tLa^k_{2}(\xib) ,..., \tLa^k_{N_k}(\xib)  \right) \in \R^{1\times N_k} \text{ for }  k=0,3, \\
\tLaB^k(\xib)&=\left( \tLab^k_{1,1}(\xib),\tLab^k_{1,2}(\xib),..., \tLab^k_{N_k,3}(\xib) \right) \in \R^{3\times 3N_k} \text{ for }  k=1,2.
\end{align*}
Then, the following relations hold for the basis functions,
\begin{equation}\label{diffop}
\begin{aligned}
\nabla_{\xib}\tLam^0(\xib) =  \tLaB^1(\xib) \G, 
\quad
\nabla_{\xib} \times \tLaB^1(\xib) =  \tLaB^2(\xib) \C,
\quad
\nabla_{\xib} \cdot \tLaB^2(\xib) =  \tLam^3(\xib) \D,
\end{aligned}
\end{equation}
for some matrix $\G \in \R^{3N_1\times N_0}$ denoting the discrete gradient matrix, $\C \in \R^{3N_2\times 3N_1}$ denoting the discrete curl matrix and $\D \in \R^{N_3\times 3N_2}$ denoting the discrete divergence matrix, all independent of $\xib$. These matrices need to satisfy 
\begin{align} \label{discomp}
\D \C = 0 \text{ and } \C \G =0 
\end{align}
to mimic the complex properties $\div \curl = 0$ and $\curl \grad = 0$.

%

\begin{lemma}
From the de Rham sequence on the logical mesh, a de Rham sequence can be constructed on the physical domain by
\begin{align*}
\Lambda^0(\xb) = \tLam^0(\xib), \ \LaB^1(\xb) = N(\xib)\tLaB^1(\xib), \ \LaB^2(\xb) = \frac{DF(\xib)}{J_F(\xib)}\tLaB^2(\xib), \ \Lambda^3(\xb) = \frac{1}{J_F(\xib)}\tLam^3(\xib).
\end{align*}
\end{lemma}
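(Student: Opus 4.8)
The plan is to show that the Piola-type transformations in the statement intertwine the physical first-order operators $\grad_\xb,\curl_\xb,\div_\xb$ with their logical counterparts, so that the discrete differential matrices $\G,\C,\D$ of \eqref{diffop} remain literally unchanged on the physical domain; the complex relations \eqref{discomp} then transfer verbatim and exhibit $(\Lambda^0,\LaB^1,\LaB^2,\Lambda^3)$ as a discrete de Rham subcomplex of \eqref{CGL_g}.

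First I would record the three commuting identities for a single field. The chain rule gives, for a $0$-form, $\nabla_\xb g(\xb) = DF(\xib)^{-\top}\nabla_\xib \tilde g(\xib) = N(\xib)\nabla_\xib \tilde g(\xib)$. For a $1$-form transformed by the covariant Piola map $\Eb(\xb)=N(\xib)\tEb(\xib)$, the curl is the contravariant Piola image of the logical curl, $\nabla_\xb\times\Eb(\xb) = \frac{DF(\xib)}{J_F(\xib)}(\nabla_\xib\times\tEb(\xib))$. For a $2$-form transformed by the contravariant Piola map $\Bb(\xb)=\frac{DF(\xib)}{J_F(\xib)}\tBb(\xib)$, the divergence scales like a $3$-form, $\nabla_\xb\cdot\Bb(\xb) = \frac{1}{J_F(\xib)}(\nabla_\xib\cdot\tBb(\xib))$. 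These are precisely the statements that the vector proxies of the pullback $F^\star$ of $0$-, $1$- and $2$-forms commute with the exterior derivative; I would either cite them (e.g.\ \cite{kreeft2011mimetic}, or \cite{AdvancedFEM} as already used for Prop.~\ref{derivtrafo}) or deduce them in one line from $d\circ F^\star = F^\star\circ d$ via the form/proxy dictionary. The same hypotheses ($F$ and $F^{-1}$ smooth, $J_F\neq 0$) ensure the transformed functions lie in $H^1(\Omega)$, $H(\curl,\Omega)$, $H(\div,\Omega)$, $L^2(\Omega)$ respectively, so the spaces $V_k$ generated by the entries (resp.\ columns) of $\Lambda^0,\LaB^1,\LaB^2,\Lambda^3$ are conforming subspaces of the physical Sobolev spaces.

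Next I would apply these identities columnwise to the basis matrices, since each column is a single basis function and $\grad,\curl,\div$ act on the matrices column by column. From $\Lambda^0(\xb)=\tLam^0(\xib)$ and \eqref{diffop},
\begin{align*}
\nabla_\xb\Lambda^0(\xb) = N(\xib)\nabla_\xib\tLam^0(\xib) = N(\xib)\tLaB^1(\xib)\G = \LaB^1(\xb)\G .
\end{align*}
From $\LaB^1(\xb)=N(\xib)\tLaB^1(\xib)$,
\begin{align*}
\nabla_\xb\times\LaB^1(\xb) = \tfrac{DF(\xib)}{J_F(\xib)}\bigl(\nabla_\xib\times\tLaB^1(\xib)\bigr) = \tfrac{DF(\xib)}{J_F(\xib)}\tLaB^2(\xib)\C = \LaB^2(\xb)\C .
\end{align*}
From $\LaB^2(\xb)=\tfrac{DF(\xib)}{J_F(\xib)}\tLaB^2(\xib)$,
\begin{align*}
\nabla_\xb\cdot\LaB^2(\xb) = \tfrac{1}{J_F(\xib)}\bigl(\nabla_\xib\cdot\tLaB^2(\xib)\bigr) = \tfrac{1}{J_F(\xib)}\tLam^3(\xib)\D = \Lambda^3(\xb)\D .
\end{align*}
Hence the physical basis obeys \eqref{diffop} with the same $\G,\C,\D$, so $\grad V_0\subseteq V_1$, $\curl V_1\subseteq V_2$, $\div V_2\subseteq V_3$, and \eqref{discomp} ($\C\G=0$, $\D\C=0$) gives $\curl\grad=0$ and $\div\curl=0$ on these spaces; pulling the logical projectors $\Pi_k$ back through the same transforms yields commuting projections onto $V_0,V_1,V_2,V_3$, and since $F$ is a diffeomorphism any exactness of the logical complex is inherited as well.

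The only non-routine ingredient is the curl identity for the covariant Piola transform; the $0$- and $2$-form identities are immediate from the chain rule and the Jacobian bookkeeping already used in Prop.~\ref{derivtrafo}. I expect to obtain the curl identity most cleanly from $d\circ F^\star = F^\star\circ d$ on $1$-forms, which sidesteps a direct computation with the cofactor matrix $J_F\,N=\operatorname{cof}(DF)$; the direct componentwise derivation is also possible but is where the algebra is heaviest.
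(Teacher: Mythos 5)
Your proposal is correct and follows essentially the same route as the paper: the paper's proof consists precisely of the three displayed chains of equalities you write down (gradient, curl, and divergence of the transformed basis matrices, using the Piola commuting identities and \eqref{diffop} to conclude that the same $\G$, $\C$, $\D$ work on the physical mesh). The additional remarks you make about conformity of the resulting subspaces and the transfer of \eqref{discomp} are consistent elaborations rather than a different argument.
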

\begin{proof}
The following omputations show the assertions:
\begin{align*}
\nabla_{\xb} \Lambda^0(\xb) &= N(\xib) \nabla_{\xib} \tLam^0(\xib) = N(\xib) \tLaB^1(\xib) \G = \LaB^1(\xb) \G,
\\
\nabla_{\xb} \times \LaB^1(\xb) &= \frac{DF(\xib)}{J_F(\xib)} \nabla_{\xib} \times \tLaB^1(\xib)  = \frac{DF(\xib)}{J_F(\xib)} \tLaB^2(\xib) \C = \LaB^2(\xb) \C,
\\
\nabla_{\xb} \cdot \LaB^2(\xb) &= \frac{1}{J_F(\xib)} \nabla_{\xib} \cdot \tLaB^2(\xib)  = \frac{1}{J_F(\xib)} \tLam^3(\xib) \D = \Lambda^3(\xb) \D,
\end{align*}
for the same matrices $ \G$, $\C$ and $\D$ as on the logical mesh.
\end{proof}	
The mass matrices for differential forms are defined as
\begin{equation}
\begin{aligned} \label{mass}
(\tM_0)_{ij}&= \int_{{\tom}} \tLa^0_i \tLa^0_j |J_F| \ \du\xib \text{ for } 1 \leq i,j \leq N_0, 
  \\
 (\tM_1)_{IJ}&= \int_{{\tom}} (\tLab^1_I)^\top G_n \tLab^1_{J} |J_F| \ \du\xib \text{ for } 1 \leq I,J \leq 3 N_1, 
  \\
  (\tM_2)_{IJ}& = \int_{{\tom}} (\tLab^2_I)^\top G_t \tLab^2_J \frac{1}{|J_F|} \ \du\xib
 \text{ for } 1 \leq I,J \leq 3N_2,
 \\
(\tM_3)_{ij}&= \int_{{\tom}} \tLa^3_i  \tLa^3_j  \frac{1}{|J_F|} \ \du\xib \text{ for } 1 \leq i,j \leq N_3. 
\end{aligned}
\end{equation}

\subsubsection{Discretisation of the curvilinear Maxwell equations}
To discretise Maxwell's equations based on the compatible finite element spaces, we represent the electromagnetic fields with the $3N_k$, $k=1,2$, degrees of freedom as 
\begin{subequations}
\begin{align}\label{ef}
&\tEb_h(\xib,t)=\tLaB^1(\xib) \teb(t)= \sum_{j=1}^{N_1} \sum_{i=1}^3 \tLab^1_{j,i}(\xib) \te_{j,i}(t), 
\\\label{bf} 
&\tBb_h(\xib,t)=\tLaB^2(\xib) \tbb(t)= \sum_{k=1}^{N_2} \sum_{i=1}^3 \tLab^2_{k,i}(\xib) \tb_{k,i}(t). 
\end{align}
\end{subequations}
We recapitulate the Piola transform \eqref{PiolaE}, \eqref{PiolaB} of the electromagnetic fields and introduce the basis representation of the finite-dimensional subspaces
\begin{align*} 
\Eb_h(\xb,t)&=\Eb_h(F(\xib),t) = N(\xib) \tEb_h(\xib,t)=  N(\xib) \tLaB^1(\xib) \teb(t),
\\
\Bb_h(\xb,t)&=\Bb_h(F(\xib),t)= \frac{DF(\xib)}{J_F(\xib)} \tBb_h(\xib,t)=\frac{DF(\xib)}{J_F(\xib)} \tLaB^2(\xib) \tbb(t).
\end{align*}

\begin{prop}The transformed discrete version of the Amp\`ere and the electric Gauss law take the following form in matrix notation
\begin{subequations}
\begin{align}\label{discreteAmpere}
\tM_1 \dot{\teb}&= \C^\top \tM_2 \tbb-  \MM_q  \tLaBB^1(\Xib)^\top \NN(\Xib)^\top\Vb,
\\ \label{discreteGauss}
\G^\top\tM_1 \teb&= - \MM_q \tLaBB^0(\Xib)^\top \mathbbm{1}_{N_p}.
\end{align}
\end{subequations}
\end{prop}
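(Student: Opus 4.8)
The plan is to start from the weak curvilinear Ampère law \eqref{weakAmpere} and the weak Gauss law \eqref{weakGauss}, substitute the finite element ansatz \eqref{ef}--\eqref{bf} for the fields, and test against the basis functions of the appropriate discrete spaces, reading off the matrix identities by comparing coefficients. Concretely, for Ampère's law I would take $\tilde{\boldsymbol{\varphi}}=\tLab^1_I$ for each $I=1,\dots,3N_1$. The left-hand side $\frac{\du}{\du t}\int_{\tom} N\tLab^1_I \cdot N \tLaB^1 \teb\,|J_F|\du\xib$ becomes $\frac{\du}{\du t}\big((\tM_1\teb)_I\big)$ by the definition of $\tM_1$ in \eqref{mass} together with $(\tLab^1_I)^\top G_n \tLaB^1 = (N\tLab^1_I)\cdot(N\tLaB^1)$ since $G_n=N^\top N$; this gives the $\tM_1\dot{\teb}$ term. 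The curl term $\int_{\tom}\frac{DF}{J_F}\nabla_\xib\times\tLab^1_I\cdot\frac{DF}{J_F}\tBb_h\,|J_F|\du\xib$ is rewritten using $\nabla_\xib\times\tLaB^1=\tLaB^2\C$ from \eqref{diffop}, so $\nabla_\xib\times\tLab^1_I$ is the $I$-th column of $\tLaB^2\C$; then $(DF/J_F)\tLaB^2$-pairings against $(DF/J_F)\tLaB^2\tbb$ with weight $|J_F|$ reproduce exactly the weighted $G_t/|J_F|$ inner product defining $\tM_2$, yielding $(\C^\top\tM_2\tbb)_I$.

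For the source term, I would insert the discrete current \eqref{current} into the functional $\tJb(N\tilde{\boldsymbol\varphi})$: since $\tJb_h$ is a sum of delta distributions at $\xib_p$ with weights $q_p\omega_p\vb_p/|J_F(\xib_p)|$, and the delta in logical coordinates carries the $1/|J_F(\xib_p)|$ factor that is cancelled by the $|J_F|$ appearing implicitly in how the functional acts (or, equivalently, by working directly with the physical-space pairing $\int \Jb_h\cdot(N\tilde{\boldsymbol\varphi})\du\xb$ which by \eqref{pic}--\eqref{current} collapses to $\sum_p q_p\omega_p \vb_p\cdot N(\xib_p)\tLab^1_I(\xib_p)$), one obtains $\big(\MM_q\,\tLaBB^1(\Xib)^\top\NN(\Xib)^\top\Vb\big)_I$ after recognising $\tLaBB^1(\Xib)$ as the stacked evaluation matrix of the $\tLab^1$ basis at all particle positions and $\NN(\Xib)=\operatorname{diag}(N(\xib_p))$, $\MM_q=\operatorname{diag}(\omega_pq_p)\otimes\mathbb I_3$. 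Collecting the three contributions for all $I$ gives \eqref{discreteAmpere}.

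For the Gauss law \eqref{discreteGauss} the argument is parallel but one degree lower: test \eqref{weakGauss} with $\tilde\psi=\tLa^0_i$, use $\nabla_\xib\tLam^0=\tLaB^1\G$ from \eqref{diffop} so that $\nabla_\xib\tLa^0_i$ is the $i$-th column of $\tLaB^1\G$, and the weighted $G_n$-pairing against $\tLaB^1\teb$ again reconstructs $\tM_1$, giving $-(\G^\top\tM_1\teb)_i$ on the left. On the right, inserting the discrete charge \eqref{charge} into $\tilde\rho(\tLa^0_i)$ collapses (by the same delta-function/Jacobian cancellation as above) to $\sum_p q_p\omega_p\,\tLa^0_i(\xib_p) = \big(\MM_q\,\tLaBB^0(\Xib)^\top\mathbbm 1_{N_p}\big)_i$, where $\tLaBB^0(\Xib)$ stacks the $0$-form basis evaluated at the particle positions and $\mathbbm 1_{N_p}$ is the all-ones vector. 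This yields \eqref{discreteGauss}.

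The computation is essentially bookkeeping, so the main obstacle is purely notational: being careful and consistent about the Jacobian factors. Specifically, one must track that the $1/|J_F(\xib_p)|$ in the logical-coordinate delta representation \eqref{pic} is exactly cancelled when the linear functionals $\tJb,\tilde\rho$ are evaluated, so that the particle-position evaluation matrices $\tLaBB^1(\Xib)$, $\tLaBB^0(\Xib)$ appear \emph{without} Jacobian weights — consistent with the physical-space pairings $\Jb(\boldsymbol\varphi)=\int_\Omega\Jb\cdot\boldsymbol\varphi\,\du\xb$ and $\rho(\psi)=\int_\Omega\rho\,\psi\,\du\xb$ being coordinate-independent. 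A secondary point to verify is that the $G_n$- and $G_t$-weighted inner products produced by the Piola transforms $N$ and $DF/J_F$ match precisely the mass-matrix definitions in \eqref{mass} (including the $|J_F|$ versus $1/|J_F|$ weights for $\tM_1$ versus $\tM_2$), which is where the "weak" interpretation of Ampère's law with $\Eb$ representing $\Db$ and $\Bb$ representing $\Hb$ enters.
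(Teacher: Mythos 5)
Your proposal is correct and follows essentially the same route as the paper's proof: substitute the ansatz \eqref{ef}--\eqref{bf} into the weak laws \eqref{weakAmpere}--\eqref{weakGauss}, test with the basis functions, apply \eqref{diffop}, insert the discrete current \eqref{current} and charge \eqref{charge}, and identify the mass matrices \eqref{mass}. Your explicit tracking of the $1/|J_F(\xib_p)|$ cancellation in the delta-function source terms is a detail the paper leaves implicit, but the argument is the same.
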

\begin{proof}
The Maxwell equations in weak form are discretised by approximating $(\tEb,\tBb)\in H(\curl,\tom)\times H(\div,\tom)$ with the discrete fields $(\tEb_h,\tBb_h)\in \tilde{V}_1 \times \tilde{V}_2$ and discrete test functions in $\tilde V_0$ and $\tilde V_1$.

For Amp\`ere's law, we insert \eqref{ef} and \eqref{bf} into \eqref{weakAmpere} and use the basis functions $\tLaB^1 \in \tilde V_1$ as test functions
\begin{align*} 
&\frac{\du}{\du t} \int_{\tom} \left(N(\xib) \tLaB^1(\xib)\right)^\top N(\xib)\tLaB^1(\xib) \teb  |J_F(\xib)| \du\xib\\
=&\int_{\tom} \left(\frac{DF(\xib)}{J_F(\xib)} \nabla_{\xib} \times \tLaB^1(\xib)\right)^\top \frac{DF(\xib)}{J_F(\xib)}\tLaB^2(\xib) \tbb |J_F(\xib)|  \du\xib- \tJb( N \tLaB^1) .
\end{align*}
Next, we use the relation \eqref{diffop} for the $\curl$  and insert the transformed current \eqref{current}
\begin{align*}
&\int_{\tom} \tLaB^1(\xib)^\top G_n(\xib) \tLaB^1(\xib) |J_F(\xib)|  \du\xib \ \dot{ \teb }  \\=&\C^\top \int_{\tom}  \tLaB^2(\xib)^\top G_t(\xib) \tLaB^2(\xib) \frac{1}{|J_F(\xib)|}  \du\xib \  \tbb
- \sum_{p=1}^{N_p} q_p\omega_p  \tLaB^1(\xib_p)^\top N(\xib_p)^\top  \vb_p. 
\end{align*}
For Gauss' law, we insert \eqref{ef} into \eqref{weakGauss} and choose the basis functions $\tLam^0\in \tilde{V}_0$ as test functions
\begin{align*}
-\int_{\tom} \left( N(\xib) \nabla_{\xib} \tLam^0(\xib) \right)^\top N(\xib)\tLaB_1(\xib) \teb  |J_F(\xib)| \ \du\xib
=  \tilde \rho((\tLam^0)^\top).
\end{align*}
Then, we use the relation \eqref{diffop} for the gradient and insert the transformed charge \eqref{charge}
\begin{align*}\G^\top \int_{\tom} \tLaB^1(\xib)^\top G_n(\xib) \tLaB^1(\xib) |J_F(\xib)|  \du\xib \ \teb & = 
- \sum_{p=1}^{N_p} q_p \omega_p \tLam^0(\xib_p)^\top.
\end{align*}
With the notation from \eqref{mass} we obtain the equations in matrix notation. 
\end{proof}

\begin{prop} The transformed discrete version of Faraday's and magnetic Gauss' law take the following form in matrix notation
\begin{subequations}
\begin{align}\label{discreteFaraday}
\dot {\tbb} &= -  \C \teb,
\\ \label{discretemGauss}
\D \tbb&=0.
\end{align}
\end{subequations}
\end{prop}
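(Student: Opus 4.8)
The plan is to read both identities off the strong-form equations \eqref{strongFaraday}--\eqref{strongmGauss}, which by the previous proposition are unchanged by the coordinate transformation, combined with the Galerkin ansatz \eqref{ef}--\eqref{bf} for the discrete fields and the exactness relations \eqref{diffop} satisfied by the basis functions of the discrete de Rham sequence on $\tom$. No variational formulation is needed here, since Faraday's law and the magnetic Gauss law are imposed strongly.

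First I would treat Faraday's law. Substituting $\tBb_h = \tLaB^2 \tbb$ and $\tEb_h = \tLaB^1 \teb$ into \eqref{strongFaraday} and moving the time derivative onto the time-independent coefficient vector gives
\[
\tLaB^2(\xib)\,\dot{\tbb}(t) = -\nabla_{\xib} \times \bigl(\tLaB^1(\xib)\,\teb(t)\bigr) = -\bigl(\nabla_{\xib} \times \tLaB^1(\xib)\bigr)\teb(t) = -\tLaB^2(\xib)\,\C\,\teb(t),
\]
where the last equality is the curl relation in \eqref{diffop}. Since this holds for every $\xib \in \tom$ and the basis functions collected in $\tLaB^2$ are linearly independent, the coefficient vectors must agree, which yields $\dot{\tbb} = -\C\,\teb$. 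The same identity also shows $\tBb_h(t)$ stays in $\tilde V_2$ for all $t$, so the ansatz \eqref{bf} is consistent under the flow.

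For the magnetic Gauss law there are two equivalent routes. Directly, inserting \eqref{bf} into \eqref{strongmGauss} and using the divergence relation in \eqref{diffop} gives $\tLam^3(\xib)\,\D\,\tbb = \nabla_{\xib}\cdot(\tLaB^2(\xib)\tbb) = 0$, and linear independence of the scalar basis functions collected in $\tLam^3$ forces $\D\,\tbb = 0$. The structurally more informative argument differentiates in time and uses the discrete Faraday law just established: $\frac{\du}{\du t}(\D\,\tbb) = \D\,\dot{\tbb} = -\D\,\C\,\teb = 0$ by the discrete complex property $\D\C = 0$ from \eqref{discomp}. Hence $\D\,\tbb$ is conserved, and if the initial data satisfy $\D\,\tbb(0) = 0$ — which holds whenever $\tBb_h(0)$ is obtained from a divergence-free field through the projection $\Pi_2$ — then $\D\,\tbb(t) = 0$ for all time.

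The computation is essentially a one-line substitution, so I expect no genuine obstacle; the only point that deserves care is the passage from the pointwise identities between functions to the identities between coefficient vectors, which rests on the linear independence of the finite element basis functions (and, in the dynamical version of the Gauss law, on having the discrete Faraday law and $\D\C = 0$ already at hand).
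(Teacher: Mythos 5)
Your proof is correct and follows essentially the same route as the paper: insert the basis representations \eqref{ef}, \eqref{bf} into the strong-form equations \eqref{strongFaraday}--\eqref{strongmGauss}, apply the relations \eqref{diffop}, and conclude by linear independence of the bases collected in $\tLaB^2$ and $\tLam^3$. Your alternative dynamical argument for \eqref{discretemGauss} via $\D\C=0$ is a valid extra observation (the paper makes that point separately through the Casimir analysis in Section \ref{sec:structure}), but the core derivation matches the paper's.
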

\begin{proof}
We insert the discrete transformed fields and their basis representation \eqref{ef}, \eqref{bf} into \eqref{strongFaraday} and \eqref{strongmGauss}. Then, \eqref{discreteFaraday} and \eqref{discretemGauss} follow, since $\tLaB^2$ and $\tLaB^3$ are a basis, respectively.
\end{proof}

\section{Semi-discrete Hamiltonian structure}\label{sec:structure}
In the previous section, we have obtained a spatial semi-discretisation of the Vlasov--Maxwell system. Let us now analyse the structure of this semi-discretisation.

\subsection{Equations of motion and Poisson matrix}
From the discretisation of the Vlasov--Maxwell system \eqref{discreteVlasov}, \eqref{discreteAmpere} and \eqref{discreteFaraday} we get the following equations of motion with hybrid particle push,
\begin{equation}
\begin{aligned} \label{equations of motion}
\dot{\Xib}&=\NN(\Xib)^\top\Vb, \\
\dot{\Vb}&=\MM_m^{-1}\MM_q \NN(\Xib) \left( \tLaBB^1(\Xib) \teb+ (\NN(\Xib)^\top \Vb) \times \tLaBB^2(\Xib) \tbb \right),\\
\tM_1 \dot{\teb} &=\C^\top\tM_2 \tbb -  \tLaBB^1(\Xib)^\top \NN(\Xib)^\top \MM_q  \Vb,\\
\dot{\tbb}&=-\C \teb,
\end{aligned}
\end{equation}
where we denote by $\tLaBB^1(\Xib)$ the $3N_p\times 3N_1$ matrix with generic term $\tLaB^1_I(\xib_p)$ for $1\leq p\leq N_p$ and  $ 1\leq I \leq 3N_1$. Furthermore, we introduce the $N_p\times N_0$ matrix $\tLaBB^0(\Xib)$ with generic term $\tLam^0_i(\xib_p)$ for $1\leq p\leq N_p, 1\leq i \leq N_0$. $\tLaBB^2$ and $\tLaBB^3$ are defined accordingly.
 
The corresponding divergence constraints were discretised in \eqref{discreteGauss} and \eqref{discretemGauss} as
\begin{equation}\label{divconstraints}
\begin{aligned}
\G^\top \tM_1 \teb &= -\MM_q \tLaBB^0(\Xib)^\top \mathbbm{1}_{N_p},
\\
\D \tbb &= 0.
\end{aligned}
\end{equation}

\begin{remark}
Note that it would also be possible to transform the velocity into logical coordinates,  $\tilde{\Vb}=\NN(\Xib)^\top\Vb$. However, this leads to the velocity push $ \dot{\tilde{\Vb}}= \NN^\top \dot{\Vb} + \frac{\partial (\NN(\Xib(t))^\top \Vb)}{\partial t}$, which contains a derivative tensor.
\end{remark}
Let us consider the semi-discrete Hamiltonian that corresponds to system \eqref{equations of motion}.
\begin{prop}
The semi-discrete Hamiltonian can be written in matrix notation as
\begin{align}\label{disHam}
\tilde{\mathcal{H}}_h= \frac{1}{2}\Vb^\top \MM_m \Vb+\frac{1}{2}\teb^\top \tM_1 \teb+\frac{1}{2}\tbb^\top \tM_2 \tbb.
\end{align}
\end{prop}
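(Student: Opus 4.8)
The plan is to start from the continuous Hamiltonian \eqref{Hamiltonian}, which splits into a kinetic part and the electric and magnetic field energies, and to substitute into each term the corresponding discrete object: the particle distribution $f_h$ into the kinetic part, and the Piola-transformed finite element representations of $\Eb_h$ and $\Bb_h$ into the field parts. After each substitution I would pull every integral back to the logical domain $\tom$ via $\du\xb = |J_F|\du\xib$ and read off the mass matrices from the definitions \eqref{mass}.

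For the kinetic term $\sum_s \frac{m_s}{2}\int |\vb|^2 f_s \du\xb\,\du\vb$, I would insert $f_h(\xb,\vb,t)=\sum_{p=1}^{N_p}\omega_p\delta(\xb-\xb_p)\delta(\vb-\vb_p)$ in physical coordinates and integrate out both $\delta$-functions. This collapses the integral to $\sum_p \tfrac{m_p}{2}\omega_p|\vb_p|^2$, which is exactly $\tfrac{1}{2}\Vb^\top\MM_m\Vb$ by the definition $\MM_m=\diag(\omega_p m_p)\otimes\mathbb{I}_3$. Working with $f_h$ in physical coordinates is the cleanest route: the extra $1/|J_F|$ that appears in the logical representation \eqref{pic} is precisely cancelled by the $|J_F|$ from the volume element, which is a consistency check on \eqref{pic} itself.

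For the electric field energy $\tfrac{1}{2}\int_\Omega|\Eb_h|^2\du\xb$, I would substitute $\Eb_h(F(\xib),t)=N(\xib)\tLaB^1(\xib)\teb$, change variables to $\tom$, and factor the coefficient vector out of the square, using $|N\tLaB^1\teb|^2=\teb^\top(\tLaB^1)^\top N^\top N\tLaB^1\teb$. Since $N^\top N=G_n$, the remaining matrix integral $\int_{\tom}(\tLaB^1)^\top G_n\tLaB^1|J_F|\du\xib$ is exactly $\tM_1$ by \eqref{mass}, giving $\tfrac{1}{2}\teb^\top\tM_1\teb$. The magnetic field energy is treated identically with $\Bb_h(F(\xib),t)=\tfrac{DF(\xib)}{J_F(\xib)}\tLaB^2(\xib)\tbb$: expanding the square produces the weight $\tfrac{DF^\top DF}{J_F^2}\,|J_F| = G_t\tfrac{1}{|J_F|}$, so the matrix integral is $\tM_2$ and we obtain $\tfrac{1}{2}\tbb^\top\tM_2\tbb$. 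Adding the three contributions yields \eqref{disHam}.

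The argument is essentially bookkeeping, so the main point requiring care is the interplay between the signed Jacobian $J_F$ appearing in the contravariant Piola transform \eqref{PiolaB} and the unsigned $|J_F|$ that occurs both in the volume element and in the mass matrices \eqref{mass}: one uses $J_F^2=|J_F|^2$ so that the magnetic weight collapses correctly to the $1/|J_F|$ factor of $\tM_2$, and similarly one must be consistent with $|J_F|$ versus $J_F$ in the $\delta$-function scaling for the kinetic term. Once these Jacobian factors are tracked carefully, everything reduces to the defining formulas for $\MM_m$, $\tM_1$ and $\tM_2$.
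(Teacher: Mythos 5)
Your proposal is correct and follows essentially the same route as the paper: insert the particle ansatz and the finite element representations of the fields, pull the integrals back to the logical domain, and identify $\MM_m$, $\tM_1$ and $\tM_2$ from their definitions; whether one transforms the continuous Hamiltonian to logical coordinates before or after substituting the discrete objects is immaterial. Your explicit tracking of the $\omega_p$ weight in the kinetic term and of the $J_F$ versus $|J_F|$ factors is a welcome bit of extra care, but it does not change the argument.
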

\begin{proof}
We transform the Hamiltonian \eqref{Hamiltonian} to curvlinear coordinates,
\begin{align*}
\tilde{\mathcal{H}}=& \sum_s \frac{m_s}{2} \int |\vb|^2 \tf_s (\xib,\vb,t) |J_F(\xib)| \ \du\xib \du\vb\\+&\frac{1}{2}\int \left( | N(\xib) \tEb(\xib)|^2+|\frac{DF(\xib)}{J_F(\xib)}\tBb(\xib)|^2\right) |J_F(\xib)| \du\xib.
\end{align*}
For the semi-discrete version, we introduce the ansatz for the discrete particle distribution function \eqref{pic} and the basis representation of the discrete fields \eqref{ef},\eqref{bf} to find
\begin{align*}
\tilde{\mathcal{H}}_h= \sum_{p=1}^{N_p} \frac{m_p}{2} \vb_p^2 +&\frac{1}{2}\int  N(\xib) \tLaB^1(\xib) \teb \cdot N(\xib) \tLaB^1(\xib) \teb |J_F(\xib)|  \du\xib \\
+ &\frac{1}{2} \int \frac{DF(\xib)}{J_F(\xib)} \tLaB^2(\xib) \tbb \cdot \frac{DF(\xib)}{J_F(\xib)} \tLaB^2(\xib) \tbb  |J_F(\xib)| \du\xib.
\end{align*}
The assertion follows when we use the definition of the mass matrices \eqref{mass}.
\end{proof}
Then, the derivative of the discrete Hamiltonian is computed as
\begin{align*}
D\tilde{\mathcal{H}}_h(\tub)=(0,(\MM_m \Vb)^\top , (\tM_1 \teb)^\top,(\tM_2 \tbb)^\top)^\top.
\end{align*}
Next, we consider the discretisation of the Poisson bracket, which can be expressed as
\begin{equation}\label{eq:discrete_bracket}
\{\mathcal{F}_h(\tub),\mathcal{G}_h(\tub)\}_d = D \mathcal{F}_h(\tub)^\top \mathbb{J}(\tub) D\mathcal{G}_h(\tub),
\end{equation}
where $\mathbb{J}$ is the discrete Poisson matrix. In particular, setting $\mathcal{F}_h(\tub) = \tub$ and $\mathcal{G}_h(\tub) = \tilde{\mathcal{H}}_h$, the time evolution of the equations of motion is given by the discrete analogon of \eqref{timeevolution},
\begin{equation}\label{eq:equations_of_motion_poisson}
\frac{\du \tub}{\du t} = \mathbb{J}(\tub) D\tilde{\mathcal{H}}_h.
\end{equation}
For our semi-discretisation \eqref{equations of motion}, the Poisson matrix takes the form
\begin{align}\label{Poissonmatrix}
\mathbb{J}=
 \begin{pmatrix}
0& \N^\top \MM_m^{-1} &0&0\\
-\MM_m^{-1} \NN & \MM_m^{-1}\MM_q \NN \tBB \NN^\top \MM_m^{-1} & \MM_m^{-1}\MM_q \NN\tLaBB^1 \tM_1^{-1}&0\\
0&-\tM_1^{-1} (\tLaBB^1)^\top \NN^\top \MM_q \MM_m^{-1}  &0&\tM_1^{-1}\C^\top \\
0&0&-\C\tM_1^{-1}&0
\end{pmatrix},
\end{align}
where $\tBB(\mathbf{\Xib,b})$ is a $3 N_p \times 3 N_p$ block diagonal matrix with generic block 
\begin{align}\label{block_of_IxB}
\hat{\tilde B}_h(\xib_p,t)=\sum_{i=1}^{N_2} \begin{pmatrix}
0&\tb_{i,3}(t)\tLa_{i}^{2,3}(\xib_p)&-\tb_{i,2}(t)\tLa_{i}^{2,2}(\xib_p)\\
-\tb_{i,3}(t)\tLa_{i}^{2,3}(\xib_p)&0&\tb_{i,1}(t)\tLa_{i}^{2,1}(\xib_p)\\ \tb_{i,2}(t)\tLa_{i}^{2,2}(\xib_p)&-\tb_{i,1}(t)\tLa_{i}^{2,1}(\xib_p)&0
\end{pmatrix}.
 \end{align}

\subsection{Discrete Poisson bracket}
In this section, we show that, with this form of the Poisson matrix \eqref{Poissonmatrix},  \eqref{eq:discrete_bracket} indeed defines a (discrete) Poisson bracket. 
\begin{theorem}
The differential operator $\{f,g\}_d= Df^\top \mathbb{J} Dg$ forms a discrete Poisson bracket.
\end{theorem}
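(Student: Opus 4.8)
The plan is to verify the three defining properties of a Poisson bracket: bilinearity, antisymmetry, and the Jacobi identity (Leibniz' rule is automatic since $\{f,g\}_d$ is built from derivatives). Bilinearity is immediate from the expression $Df^\top \JJ Dg$. Antisymmetry reduces to checking that $\JJ$ is skew-symmetric, i.e.~$\JJ^\top = -\JJ$; reading off the blocks of \eqref{Poissonmatrix}, the off-diagonal blocks are manifestly negatives of their transposed partners (e.g.~$\NN^\top\MM_m^{-1}$ against $-\MM_m^{-1}\NN$, and $\MM_m^{-1}\MM_q\NN\tLaBB^1\tM_1^{-1}$ against $-\tM_1^{-1}(\tLaBB^1)^\top\NN^\top\MM_q\MM_m^{-1}$, using that $\MM_m,\MM_q,\tM_1$ are symmetric), while the only nonzero diagonal block $\MM_m^{-1}\MM_q\NN\tBB\NN^\top\MM_m^{-1}$ is skew-symmetric because $\tBB$ is block-diagonal with each block $\hat{\tilde B}_h(\xib_p,t)$ visibly antisymmetric from \eqref{block_of_IxB}. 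So antisymmetry is essentially bookkeeping.

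The substance is the Jacobi identity $\{\{f,g\}_d,h\}_d + \{\{g,h\}_d,f\}_d + \{\{h,f\}_d,g\}_d = 0$. Here I would follow the standard approach used for the continuous Morrison--Weinstein bracket and its Cartesian discretisation in \cite{kraus2016gempic}: decompose $\JJ$ into the "free" part $\JJ_0$ obtained by setting $\tBB=0$, plus the magnetic contribution $\JJ_B$ carrying the single $\tBB$-dependent block. The bracket splits accordingly as $\{\cdot,\cdot\}_d = \{\cdot,\cdot\}_0 + \{\cdot,\cdot\}_B$. Since the $(\Xib,\Vb,\teb,\tbb)$-dependence of $\JJ_0$ is only through $\NN(\Xib)$, and $\JJ_0$ has the block structure of a (degenerate) canonical-type bracket coupling particles to fields linearly, one checks by the Jacobi-test lemma (the alternating sum of $\sum_l \JJ_{il}\partial_l \JJ_{jk}$) that $\JJ_0$ alone satisfies Jacobi; the $\NN$-dependent derivatives $\partial_{\Xib}\NN$ must be shown to cancel in the alternating sum, exactly as the $\partial_\xb$ terms cancel in the continuous bracket. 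For the cross terms, one must verify that the mixed contributions linear in $\tBB$ vanish and that the purely magnetic term reproduces the discrete analogue of the identity $\nabla_\vb\cdot(\mathbf B\times\cdots)$ closing up — concretely, that $\div_{\xib}$-type relations among the $\tLa^{2,i}_i$ are not needed because each block $\hat{\tilde B}_h$ depends on $\xib_p$ only through its own basis-function evaluations, so the relevant derivative is $\partial_{\tbb}$, which is linear, plus $\partial_{\Xib}$ hitting $\tLa^{2,i}_i(\xib_p)$.

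The key structural point that makes the discrete Jacobi identity work, and the one I expect to be the main obstacle to write out cleanly, is the interplay between the $\NN(\Xib)^\top\Vb$ appearing in $\dot\Xib$ and the $\NN(\Xib)$ factors multiplying the field-coupling blocks: when one computes $\partial_{\xib_p}$ of a block of $\JJ$ and contracts with another block, the curvilinear Jacobian factors $N(\xib_p)$ and their derivatives appear in combinations that must telescope. I would handle this by checking that the map $(\Xib,\Vb)\mapsto(\Xib,\NN(\Xib)^\top\Vb)$ is, at the level of the bracket, a change of variables pulling $\JJ$ back from the Cartesian Poisson matrix of \cite{kraus2016gempic}; since a diffeomorphic change of coordinates preserves the Jacobi identity, and \cite{kraus2016gempic} already established it in Cartesian coordinates, the curvilinear identity follows. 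The remaining work is then to make this pullback statement precise for the degenerate (field-coupled) bracket — verifying that the transformation acts block-triangularly and that the field blocks, which are unchanged except for the insertion of $\NN$, are exactly the transforms of the Cartesian field blocks — and to note that the $\tBB$-block transforms correctly because $\vb\times\Bb$ becomes $(\NN^\top\Vb)\times\tBb$ under the same map, matching \eqref{discreteVlasov}.
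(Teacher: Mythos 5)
Your treatment of bilinearity, Leibniz and antisymmetry matches the paper's (which dispenses with these in one line), but your route to the Jacobi identity is genuinely different from the paper's and, as written, has two concrete problems. The paper proves Jacobi directly, by verifying $\sum_l\bigl(\partial_{u_l}\mathbb{J}_{ij}\,\mathbb{J}_{lk}+\mathrm{cyc.}\bigr)=0$ index block by index block: the block structure of \eqref{Poissonmatrix} leaves exactly three nontrivial cases, which are annihilated respectively by (i) equality of mixed partials of $F^{-1}$, (ii) $\div \Bb_h(\xb_p)=0$, and (iii) the discrete complex property $\nabla_\xb\times\LaB^1=\LaB^2\C$. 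A pullback argument from the Cartesian bracket of \cite{kraus2016gempic} is a legitimate alternative in principle, since a Poisson matrix conjugated by the Jacobian of a diffeomorphism again satisfies Jacobi.

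However, first, the change of variables you propose, $(\Xib,\Vb)\mapsto(\Xib,\NN(\Xib)^\top\Vb)$, is not the map that relates \eqref{Poissonmatrix} to the Cartesian Poisson matrix. The scheme uses a hybrid pusher: the velocities in \eqref{discreteVlasov} are already physical, and it is the \emph{positions} that are transformed. The correct pullback map is $\phi:(\Xib,\Vb,\teb,\tbb)\mapsto(F(\xib_1),\dots,F(\xib_{N_p}),\Vb,\teb,\tbb)$, with $\mathbb{J}=(D\phi)^{-1}\,\mathbb{J}_{\mathrm{cart}}\,(D\phi)^{-\top}$ and $D\phi=\operatorname{diag}(DF(\xib_p))\oplus\mathbb{I}$; this is what produces the $\NN^\top\MM_m^{-1}$ block and $\NN\tBB\NN^\top$ in \eqref{Poissonmatrix}, whereas your map would alter the velocity block and fail to reproduce the matrix. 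Second, and more seriously, you assert that $\div$-type relations among the $\tLa^{2,\cdot}_{\cdot}$ are ``not needed.'' They are essential: for $i,j,k$ all in the velocity block, the Jacobi sum reduces exactly to $\pm\div\Bb_h(\xb_p)$, so the Jacobi identity holds only on the invariant set $\D\tbb=0$. This conditional statement is also all that \cite{kraus2016gempic} proves in the Cartesian case, so even the pullback route cannot dispense with it; omitting it leaves the purely magnetic term of the Jacobi sum unaccounted for. Likewise, your claim that the ``mixed contributions linear in $\tBB$ vanish'' glosses over the one cancellation that actually requires the de Rham structure, namely that $\partial_{\tbb}$ of the $\NN\tBB\NN^\top$ block contracted with $\C$ matches $\nabla_\xb\times\LaB^1=\LaB^2\C$.
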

\begin{proof}
The Poisson matrix $\mathbb{J}$ is obviously antisymmetric and the bilinearity and Leibniz's rule follow trivially from the form \eqref{eq:discrete_bracket}. So, it is only left to prove that the Poisson matrix follows the Jacobi identity.

The matrix $\mathbb{J}$ satisfies the Jacobi identity if and only if the following condition holds
\begin{align*}
\sum_l \left( \frac{\partial \mathbb{J}_{ij}(\ub)}{\partial u_l} \mathbb{J}_{lk}(\ub)+ \frac{\partial \mathbb{J}_{jk}(\ub)}{\partial u_l} \mathbb{J}_{li}(\ub)+\frac{\partial \mathbb{J}_{ki}(\ub)}{\partial u_l} \mathbb{J}_{ll}(\ub) \right) = 0 \quad \forall i,j,k,
\end{align*}	
where $i,j,k,l$ run from $1$ to $6N_p+3N_1+3N_2$. The Poisson matrix $\mathbb{J}$ has the following block-structure
\begin{align*}
\mathbb{J}=\begin{pmatrix}
0&J_{12}(\Xib)&0&0\\
J_{21}(\Xib)&J_{22}(\Xib,\tbb)&J_{23}(\Xib)&0\\
0&J_{32}(\Xib)&0&J_{34}\\
0&0&J_{43}&0
\end{pmatrix}.
\end{align*}
Hence, many combinations of indices are trivially zero. In particular, the matrix only depends on $\Xib$ and $\tbb$ and hence the derivatives are only non-zero if $l \in [1,3N_p]$ or $l \in [6N_p+3N_1+1,6N_p+3N_1+3N_2]$. Moreover, we need to find combinations of $i,j,k$ (or permutations of these) for which both $\mathcal{J}_{ij}$ and $\mathcal{J}_{lk}$ are non-vanishing. For $l \in [1,3N_p]$, this only leaves the options $i,j,k \in [3N_p+1,6N_p]$ or $i \in [1,3N_p]$ and  $j,k \in [3N_p+1,6N_p]$. If $l \in [6N_p+3N_1+1,6N_p+3N_1+3N_2]$, we only have $i,j\in [3N_p+1,6N_p]$ and $k \in [6N_p+1,6N_p+3N_1]$. Let us now consider each of these three non-trivial terms one-by-one.

Let first $i \in [1,3N_p]$ and  $j,k \in [3N_p+1,6N_p]$. Then, we obtain the condition
\begin{align*}
\sum_{l=1}^{3N_p} \left( \frac{\partial \mathcal{J}_{12}(\Xib)_{ij}}{\partial \Xi_l} (\mathcal{J}_{12})_{lk}(\Xib)+ \frac{\partial \mathcal{J}_{21}(\Xib)_{ki}}{\partial \Xi_l} (\mathcal{J}_{12})_{lj}(\Xib) \right)=0.
\end{align*}
Inserting the expressions of the terms of the Poisson matrix, we get
\begin{align*}
\sum_{l=1}^{3N_p} \left( \frac{\partial(\NN(\Xib)^\top \MM_m^{-1})_{ij}}{\partial \Xi_l} (\NN(\Xib)^\top \MM_m^{-1})_{lk}
+ \frac{\partial(-\MM_m^{-1} \NN(\Xib))_{ki}}{\partial \Xi_l} (\NN(\Xib)^\top \MM_m^{-1})_{lj}\right).
\end{align*}
Since $\NN$ is a block diagonal matrix, the terms are only non-zero if all four indices belong to the same particle. Since $\MM_m$ is diagonal and has the same entry for each component of one particle, we can leave out this factor. Using the definition of the transposed inverse Jacobian matrix, $\NN_{ij} = \frac{\partial \Xib_j}{\partial \Xb_i}$, we are left with the following expression
\begin{equation*}
\sum_{l=1}^{3N_p}\left( \frac{\partial }{\partial \Xi_l}\frac{\partial \Xi_i}{ \partial X_j} \frac{\partial \Xi_l}{\partial X_k} - \frac{\partial }{\partial \Xi_l}\frac{\partial \Xi_i}{ \partial X_k} \frac{\partial \Xi_l}{\partial X_j}\right) = \frac{\partial^2 \Xi_i}{\partial X_k \partial X_j} -  \frac{\partial^2 \Xi_i}{\partial X_j \partial X_k} = 0,
\end{equation*}
where we have used the chain rule in the last step.

Next, let  $i,j,k \in [3N_p+1,6N_p]$. This yields the following expression to show
\begin{align*}
\sum_{l=1}^{3N_p} \left( \frac{\partial \mathcal{J}_{22}(\Xib)_{ij}}{\partial \Xi_l} \mathcal{J}_{12}(\Xib)_{lk}+ \frac{\partial \mathcal{J}_{22}(\Xib)_{jk}}{\partial \Xi_l} \mathcal{J}_{12}(\Xib)_{li}+\frac{\partial \mathcal{J}_{22}(\Xib)_{ki}}{\partial \Xi_l} \mathcal{J}_{12}(\Xib)_{lj} \right)=0 .
\end{align*}
With the expressions of the Poisson matrix we obtain
\begin{align*}
&\sum_{l=1}^{3N_p}\left( \frac{\partial(\MM_q \NN(\Xib) \tBB(\Xib) \NN(\Xib)^\top )_{ij}}{\partial \Xi_l} (\NN(\Xib)^\top)_{lk}\right.\\
&+ \frac{\partial(\MM_q \NN(\Xib) \tBB(\Xib) \NN(\Xib)^\top )_{jk}}{\partial \Xi_l} (\NN(\Xib)^\top)_{li}
\\
&+\left.\frac{\partial(\MM_q \NN(\Xib) \tBB(\Xib) \NN(\Xib)^\top )_{ki}}{\partial \Xi_l} (\NN(\Xib)^\top)_{lj} \right). 
\end{align*}
Since $\MM_m$ is a diagonal matrix, each term contains $(\MM_m^{-1})_{ii}(\MM_m^{-1})_{jj}(\MM_m^{-1})_{kk}$ which therefore is left out. Moreover, both $\NN$ and $\tBB$ are block-diagonal so the only case in which the terms are non-zero is when all four indices belong to the same particle. Let us denote the corresponding particle index by $p$ and introduce $\mu, \nu, \sigma \in \{1,2,3\}$ as $i-3N_p=3(p-1)+\mu$, $j-3N_p=3(p-1)+\nu$, $k-3N_p=3(p-1)+\sigma$. In this case, we also have $(\MM_q)_{ii} = (\MM_q)_{jj} =(\MM_q)_{kk}$ so that we can leave out this matrix as well. At last, we use the definition of $\NN^\top$ and the following identity for the generic block $\tilde B(\xib_p)$ of $\tBB$, 
\begin{align*}
\hat B_h=\mathbb{I}\times \Bb_h=\mathbb{I}\times \frac{DF}{J_F}\tBb_h= N \left((N^\top ) \times \tBb_h \right)= N \hat{ \tilde B}_h N^\top.
\end{align*}
This leaves us with
\begin{align*}
&\sum_{\eta=1}^3 \left(\frac{\partial \hat B_h(\xb_p)_{\mu  \nu}}{\partial \xi_{p,\eta}} \frac{\partial \xi_{p,\eta}}{\partial x_{p,\sigma}}
 +\frac{\partial \hat B_h(\xb_p)_{\nu  \sigma}}{\partial \xi_{p,\eta}} \frac{\partial \xi_{p,\eta}}{\partial x_{p,\mu}} 
 + \frac{\partial \hat B_h(\xb_p)_{\sigma  \mu}}{\partial \xi_{p,\eta}} \frac{\partial \xi_{p,\eta}}{\partial x_{p,\nu}}  
\right) \\
&=  \left(\frac{\partial \hat B_h(\xb_p)_{\mu\nu}}{\partial x_{p,\sigma}}
+ \frac{\partial \hat B_h(\xb_p)_{\nu\sigma}}{\partial x_{p,\mu}}
+\frac{\partial \hat B_h(\xb_p)_{\sigma\mu}}{\partial x_{p,\nu}} \right).
\end{align*}
If $\mu=\nu=\sigma$, we get the diagonal terms that are zero and if two indices are the same, say $\mu=\nu$, we get $\frac{\partial \hat B_h(\xb_p)_{\mu\sigma}}{\partial x_{p,\mu}}
+\frac{\partial \hat B_h(\xb_p)_{\sigma\mu}}{\partial x_{p,\mu}} = 0$ due to the antisymmetry of $\hat B_h$. Lastly, if the three indices are all different, we get
\begin{equation*}
\pm \left(\frac{\partial \hat B_h(\xb_p)_{12}}{\partial x_{p,3}}
+ \frac{\partial \hat B_h(\xb_p)_{23}}{\partial x_{p,1}}
+\frac{\partial \hat B_h(\xb_p)_{31}}{\partial x_{p,2}}\right) = \pm \div \Bb_h(\xb_p).
\end{equation*}
Since $\div \Bb_h = 0$ is guaranteed over time by the construction of the discrete de Rham complex when it is initially satisfied, this is also zero.

Finally, we consider the case that  $i,j \in [3N_p+1, 6N_p]$,  $k\in [6N_p+1, 6N_p+3N_1]$ 
\begin{align*}
\sum_{l=1}^{3N_p} \left( \frac{\partial \mathcal{J}_{23}(\Xib)_{ik}}{\partial \Xi_l} \mathcal{J}_{12}(\Xib)_{lj}+ \frac{\partial \mathcal{J}_{32}(\Xib)_{kj}}{\partial \Xi_l} \mathcal{J}_{12}(\Xi)_{li}\right)+\sum_{A=1}^{3N_2}\frac{\partial \mathcal{J}_{22}(\tbb)_{ij}}{\partial \tb_A} (\mathcal{J}_{43})_{Ak} =0. 
\end{align*}
With the expressions of the Poisson matrix, we obtain
\begin{align*}
&\sum_{l=1}^{3N_p} \left( \frac{\partial (\MM_m^{-1}\MM_q (\NN\tLaBB^1)(\Xib) \tM_1^{-1})_{ik}}{\partial \Xi_l} (\NN(\Xib)^\top \MM_m^{-1})_{lj}\right.\\
&+ \left.\frac{\partial(-\tM_1^{-1} (\NN \tLaBB^1)(\Xib)^\top \MM_q \MM_m^{-1})_{kj}}{\partial \Xi_l} (\NN(\Xib)^\top \MM_m^{-1})_{li} \right)
\\
&+\sum_{A=1}^{3N_2} \frac{\partial(\MM_m^{-1}\MM_q \NN(\Xib) \tBB(\Xib,\tbb) \NN(\Xib)^\top \MM_m^{-1})_{ij}}{\partial \tb_A} (-\C\tM_1^{-1})_{Ak}=0. 
\end{align*}
We contract this with $\MM_m$ for indices $i,j$, $\tM_1$ on index $k$ and $\MM_q^{-1}$ on index $i$,
\begin{align*}
&\sum_{l=1}^{3N_p} \left(\frac{\partial ((\NN\tLaBB^1)(\Xib) )_{ik}}{\partial \Xi_l} (\NN(\Xib)^\top)_{lj}
- \frac{\partial((\NN \tLaBB^1)(\Xib)^\top \MM_q )_{kj}}{\partial \Xi_l} ( \NN(\Xib)^\top\MM_q^{-1})_{li}\right)\\
&=\sum_{A=1}^{3N_2}\frac{\partial ( \NN(\Xib) \tBB(\Xib,\tbb) \NN(\Xib)^\top)_{ij}}{\partial \tb_A} (\C)_{Ak}.  
\end{align*} This is possible because $\MM_m,\MM_q^{-1}$ and $\tM_1$ are constant, symmetric and positive definite. 
Moreover, we have again that $i$ and $j$ must belong to the same particle due to the block-diagonal structure of the terms. Therefore, we can also contract $\MM_q$ on index $j$ and $\MM_q^{-1}$ on index $i$. Let us introduce again the corresponding particle index $p$ and $\mu,\nu \in\{1,2,3\}$ such that $i-3N_p=3(p-1) + \mu$ and $j-3N_p=3(p-1)+ \nu$. For these index combinations, the sum over the particle positions breaks down to 
\begin{align*}
&\sum_{l=1}^{3N_p} \left(\frac{\partial ((\NN\tLaBB^1)(\Xib) )_{ik}}{\partial \Xib_l} (\NN(\Xib)^\top)_{lj}
- \frac{\partial((\NN \tLaBB^1)(\Xib)^\top \MM_q )_{kj}}{\partial \Xib_l} ( \NN(\Xib)^\top\MM_q^{-1})_{li}\right)
\\
&=  \frac{\partial \LaB^1_{k,\mu}(\xb_{p})}{\partial x_{p,\nu}} -\frac{\partial \LaB^1_{k,\nu}(\xb_{p})}{\partial x_{p,\mu}}
\\
&= \left\{ \begin{matrix}
0 & \text{if } \mu = \nu\\
(\curl \LaB^1(\xb_{p}))_{\sigma k} & \text{ if } (\mu, \nu, \sigma) \text{ cyclic permutation of (1,2,3) }\\
-(\curl \LaB^1(\xb_{p}))_{\sigma k} & \text{ if } (\mu, \nu, \sigma) \text{ non-cyclic permutation of (1,2,3) }\\
\end{matrix} 
\right. ,
\end{align*}
where we used $\NN \tLaBB^1 = \LaBB^1$ and the chain rule in the first equality.
For the derivative with respect to $\tbb$, we use expression \eqref{block_of_IxB} for the block of $\BB$ belonging to particle $p$ to find
\begin{align*}
\sum_{A=1}^{3N_2} \frac{\partial \hat{\tilde B}_h(\xib_p)}{\partial \tb_A} = \sum_{A=1}^{N_2} \hat{ \tLaB}_A(\xib_p),
\end{align*}
where
\begin{align*}
\hat{ \tLaB}_A(\xib_p)= \begin{pmatrix}
0&\tLa_{A}^{2,3}(\xib_p)&-\tLa_{A}^{2,2}(\xib_p)\\
-\tLa_{A}^{2,3}(\xib_p)&0&\tLa_{A}^{2,1}(\xib_p)\\ \tLa_{A}^{2,2}(\xib_p)&-\tLa_{A}^{2,1}(\xib_p)&0
\end{pmatrix}.
\end{align*}
It holds $\hat{\LaB}_A=N \hat{ \tLaB}_A N^\top$ in the same way as $\hat{\tB}_h=N \hat{ \tB}_h N^\top$. Hence, we get
\begin{align*}
&\sum_{A=1}^{3N_2}\frac{\partial ( \NN(\Xib) \tBB(\Xib,\tbb) \NN(\Xib)^\top)}{\partial \tb_A}   = \sum_{A=1}^{N_2} \hat{\LaBB}_A(\Xb).
\end{align*}
Now, for $i-3N_p=3(p-1) + \mu$ and $j-3N_p=3(p-1)+ \nu$, we need the component $(\mu,\nu)$ which is zero if $\mu=\nu$ and $\LaB_{A,\sigma}^{2}$ if $(\mu, \nu, \sigma)$ is a cyclic permutation of $(1,2,3)$ (or the negative if the permutation is non-cyclic). This yields 
\begin{align*}
&\sum_{A=1}^{3N_2}\frac{\partial ( \NN(\Xib) \tBB(\Xib,\tbb) \NN(\Xib)^\top)_{ij}}{\partial \tb_A} (\C)_{Ak} = \sum_{A=1}^{N_2} \hat{\LaBB}_A(\Xb)_{ij} (\C)_{Ak}
\\
&=\left\{
\begin{matrix}
0 &  \text{ if } \mu = \nu \\
(\LaB^2(\xb_p) \C)_{\sigma k } & \text{if } (\mu, \nu, \sigma) \text{ cyclic permutation of (1,2,3) }\\
-(\LaB^2(\xb_p) \C)_{\sigma k} & \text{  if } (\mu, \nu, \sigma) \text{ non-cyclic permutation of (1,2,3) }
\end{matrix}
 \right. .
\end{align*}
Hence, the term vanishes due to the de Rham sequence properties of the basis.
\end{proof}

\subsection{Discrete Casimir invariants}
One class of functionals that are conserved over time in Hamiltonian systems are so-called Casimir invariants, functionals that Poisson commute with all other funtionals. In this section, we consider the discrete Casimir invariants of our discrete Poisson structure  \eqref{equations of motion}, i.e.~functions $C(\ub)$ of our discrete dynamic variables $\ub = (\Xib, \Vb, \teb, \tbb)$ that satisfy
 \begin{align*}
\{C,F\}=0 \Leftrightarrow \mathbb{J}(\ub) DC(\ub)=0 \text{ for all } F(\ub).
\end{align*}
First, we derive a general form for such discrete Casimir invariants and, second, we show that the divergence constraints \eqref{divconstraints} are such discrete Casimir invariants and, hence, conserved over time in our discretisation.

\begin{prop}\label{prop:discrete_casimir_general}
Let $C(\ub)$ be a discrete Poisson invariant of the system \eqref{equations of motion} with Poisson matrix \eqref{Poissonmatrix}. Then, there exist $\bar{\eb} \in \R^{N_0}$ and $\bar{\bb} \in \R^{N_3}$ such that
\begin{equation}\label{eq:discrete_casimir_general}
C(\ub) = \bar{\eb}^\top(\tLaBB^0(\Xib)^\top \MM_q \mathbbm{1}_{N_p}+  \G^\top \tM_1 \teb) + \bar{\bb}^\top \D \tbb.
\end{equation}
\end{prop}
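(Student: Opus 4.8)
The plan is to characterise $D C(\ub)\in\ker\mathbb{J}(\ub)$ directly. Decompose the gradient according to $\ub=(\Xib,\Vb,\teb,\tbb)$ as $D C(\ub)=(g_1,g_2,g_3,g_4)^\top$ with $g_1,g_2\in\R^{3N_p}$, $g_3\in\R^{3N_1}$, $g_4\in\R^{3N_2}$. Using the block form of \eqref{Poissonmatrix}, the Casimir condition $\mathbb{J}(\ub)\,D C(\ub)=0$ decouples into four vector identities that must hold for all $\ub$; in particular the first block row reads $\NN(\Xib)^\top\MM_m^{-1}g_2=0$.

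First I would read off $g_2=0$ from that first row, since $\NN(\Xib)=\diag(N(\xib_p))$ has invertible blocks and $\MM_m$ is invertible; hence $C$ is independent of $\Vb$. Substituting $g_2=0$, the fourth block row collapses to $\C\tM_1^{-1}g_3=0$, the third to $\tM_1^{-1}\C^\top g_4=0$, i.e. $\C^\top g_4=0$, and the second, after multiplying by $\MM_m$, using that the block-diagonal matrices $\MM_q$ and $\NN$ commute and $\NN$ is invertible, to $g_1=\MM_q\,\tLaBB^1(\Xib)\,\tM_1^{-1}g_3$. So the only freedom left in a Casimir is: no $\Vb$-dependence; $\tM_1^{-1}g_3\in\ker\C$; $g_4\in\ker\C^\top$; and the $\Xib$-gradient slaved to the $\teb$-gradient through $\tLaBB^1(\Xib)$.

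Next I would invoke exactness of the discrete de Rham sequence (the companion to the complex property \eqref{discomp}): $\ker\C=\operatorname{range}\G$ gives $\tM_1^{-1}g_3=\G\bar{\eb}$, i.e. $g_3=\tM_1\G\bar{\eb}$, and exactness of the dual sequence, $\ker\C^\top=\operatorname{range}\D^\top$, gives $g_4=\D^\top\bar{\bb}$, for some $\bar{\eb}\in\R^{N_0}$, $\bar{\bb}\in\R^{N_3}$. One then checks that the functional $\tilde C$ on the right-hand side of \eqref{eq:discrete_casimir_general} has gradient exactly $(\MM_q\tLaBB^1(\Xib)\G\bar{\eb},\,0,\,\tM_1\G\bar{\eb},\,\D^\top\bar{\bb})^\top$ — for the $\Xib$-component one differentiates $\bar{\eb}^\top\tLaBB^0(\Xib)^\top\MM_q\mathbbm{1}_{N_p}$ particle by particle and uses $\nabla_\xib\tLam^0=\tLaB^1\G$ from \eqref{diffop} — and this coincides with $D C$; hence $C-\tilde C$ has vanishing gradient, so it is a constant, which we absorb (or work modulo constants), giving the asserted form.

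The main obstacle is this last step, turning the pointwise algebra into the stated closed form: exactness only produces potentials $\bar{\eb}=\bar{\eb}(\ub)$, $\bar{\bb}=\bar{\bb}(\ub)$ that a priori vary with $\ub$, and one must argue they can be chosen $\ub$-independent. Here I would use that $D C$ is a genuine gradient, hence has symmetric Hessian; imposing symmetry on $\partial_\teb g_3=\tM_1\G\,\partial_\teb\bar{\eb}$, on $\partial_\tbb g_4=\D^\top\partial_\tbb\bar{\bb}$, and on the coupling $g_1=\MM_q\tLaBB^1(\Xib)\G\bar{\eb}$ — together with the fact that $\bar{\eb}$ is fixed only modulo $\ker\G$ — gives the compatibility relations that pin down constancy. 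This point, and the standing assumption that the discrete complex has trivial cohomology (no extra harmonic Casimirs), are where genuine care is needed; everything else reduces to bookkeeping with the constant, symmetric, positive-definite matrices $\MM_m,\MM_q,\tM_1$ and the block-diagonal $\NN,\tBB$.
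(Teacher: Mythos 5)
Your argument follows essentially the same route as the paper's own proof: a block-row-by-block-row analysis of $\mathbb{J}(\ub)\,DC(\ub)=0$ forcing $D_{\Vb}C=0$, $\tM_1^{-1}D_{\teb}C\in\ker\C$, $D_{\tbb}C\in\ker\C^\top$ and $D_{\Xib}C=\MM_q\tLaBB^1(\Xib)\G\bar{\eb}$, followed by exactness of the discrete sequence to produce $\bar{\eb}$ and $\bar{\bb}$. You are in fact more careful than the paper on two points it passes over silently, namely that the complex properties $\C\G=0$, $\D\C=0$ only give inclusions of ranges into kernels so that genuine exactness (trivial discrete cohomology) must be assumed, and that the potentials $\bar{\eb},\bar{\bb}$ obtained pointwise in $\ub$ must be shown independent of $\ub$ before the relations can be integrated to the closed form \eqref{eq:discrete_casimir_general}.
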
 
\begin{proof}
Let us consider the equation $\mathbb{J}(\ub) DC(\ub)=0$ line by line. The first line reads
\begin{equation*}
\NN(\Xib)^\top \MM_m^{-1}  D_{\Vb} C = 0.
\end{equation*}
Therefore, $C$ must be independent of $\Vb$. Next, we consider the third line, already assuming $D_{\Vb}C = 0$, which yields
\begin{align*}
\tM_1^{-1} \C^\top  D_{\tbb} C = 0.
\end{align*}
Hence, it follows that $D_{\tbb}C \in \ker(\C^\top)$. Due to the complex property of our de Rham sequence, there exist a $\bar{\bb}\in \R^{N_3}$ such that $D_{\tbb}C= \D^\top \bar{\bb}$.
Analogously, the fourth line,
\begin{equation*}
\C \tM_1^{-1}   D_{\teb} C = 0, \text{ i.e.~} \tM_1^{-1} D_{\teb}C \in \ker(\C),
\end{equation*}
yields due to the complex property that there exists an  $\bar{\eb}\in \R^{N_0}$ such that $D_{\teb}C= \tM_1 \G \bar{\eb}$. Finally, the second line of the Poisson matrix yields the following expression for $D_{\Xib} C $,
\begin{equation}
D_{\Xib} C = \MM_q  \tLaBB^1(\Xib) \tM_1^{-1} D_{\teb}C= \MM_q  \tLaBB^1(\Xib) \G \bar{\eb}= \MM_q \grad \tLaBB^0(\Xib) \bar{\eb},
\end{equation}
where we used again the complex property for the last equality. Putting everything together, we get the general form \eqref{eq:discrete_casimir_general} of a discrete Casimir invariant.
\end{proof}
As a consequence the divergence constraints \eqref{divconstraints} are conserved over time.

\begin{cor}
The discrete magnetic Gauss law, $\D \tbb=0$, is conserved over time if it is satisfied initially.
\end{cor}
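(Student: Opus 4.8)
The plan is to obtain this as an immediate special case of Proposition~\ref{prop:discrete_casimir_general}. First I would fix an arbitrary $\bar{\bb}\in\R^{N_3}$ and take $\bar{\eb}=0$ in \eqref{eq:discrete_casimir_general}, which produces the linear functional $C(\ub)=\bar{\bb}^\top\D\tbb$. I would then check directly that $C$ is a discrete Casimir invariant: its gradient is $DC(\ub)=(0,0,0,\D^\top\bar{\bb})^\top$, so multiplying by the last block column of the Poisson matrix \eqref{Poissonmatrix} gives $\mathbb{J}(\ub)DC(\ub)=(0,0,\tM_1^{-1}\C^\top\D^\top\bar{\bb},0)^\top$, which vanishes because $\C^\top\D^\top=(\D\C)^\top=0$ by the discrete complex property \eqref{discomp}. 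Hence $C$ Poisson-commutes with every functional, in particular with the semi-discrete Hamiltonian \eqref{disHam}.

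Second, I would use the evolution equation \eqref{eq:equations_of_motion_poisson}: along solutions $\frac{\du}{\du t}C(\ub)=DC(\ub)^\top\mathbb{J}(\ub)D\tilde{\mathcal{H}}_h=-\big(\mathbb{J}(\ub)DC(\ub)\big)^\top D\tilde{\mathcal{H}}_h=0$ by antisymmetry of $\mathbb{J}$. Thus $\bar{\bb}^\top\D\tbb$ is constant in time for every $\bar{\bb}\in\R^{N_3}$, so the full vector $\D\tbb$ is constant along the flow. If $\D\tbb=0$ at $t=0$, it remains zero for all $t$, which is exactly the assertion.

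There is essentially no obstacle here; the one point I would spell out is that Proposition~\ref{prop:discrete_casimir_general} only establishes the \emph{necessary} form of a Casimir, so the short sufficiency verification $\mathbb{J}(\ub)DC(\ub)=0$ above is what actually carries the argument. As a cross-check, and as an alternative bracket-free derivation, one can also note the elementary identity $\frac{\du}{\du t}(\D\tbb)=\D\dot{\tbb}=-\D\C\teb=0$, which follows directly from the discrete Faraday law \eqref{discreteFaraday} together with \eqref{discomp}.
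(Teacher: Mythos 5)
Your proof is correct and follows the same route as the paper's: both obtain the statement as a specialisation of Proposition~\ref{prop:discrete_casimir_general}. You do, however, tighten two points that the paper's one-line proof glosses over. First, the paper picks $\bar{\bb}=\mathbbm{1}_{N_3}$, which strictly speaking only yields the scalar Casimir $\mathbbm{1}_{N_3}^\top\D\tbb$, i.e.~conservation of the \emph{sum} of the components of $\D\tbb$; your choice of an arbitrary $\bar{\bb}$ (or equivalently each standard basis vector) is what actually gives conservation of the full vector, which is needed to conclude $\D\tbb\equiv 0$ from $\D\tbb(0)=0$. Second, you correctly observe that Proposition~\ref{prop:discrete_casimir_general} is stated only as a \emph{necessary} form for a Casimir, so the short sufficiency check $\mathbb{J}(\ub)DC(\ub)=0$ via $\C^\top\D^\top=(\D\C)^\top=0$ is the step that actually carries the argument; the paper leaves this implicit. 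Your closing remark that the result also follows in one line from $\D\dot{\tbb}=-\D\C\teb=0$ is a valid and more elementary alternative that bypasses the Hamiltonian machinery entirely.
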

\begin{proof}
This follows immediately from Proposition \ref{prop:discrete_casimir_general} setting $\bar{\eb} = \mathbb{0}_{N_0}$ and $\bar{\bb}=\mathbbm{1}_{N_3}$, since this leads to the discrete Casimir $\D \tbb$.
\end{proof}
\begin{cor}
The discrete electric Gauss law, $\G^\top \tM_1 \teb -\MM_q \tLaBB^0(\Xib)^\top \mathbb{1}_{N_p}=0$, is conserved over time if it is satisfied initially.
\end{cor}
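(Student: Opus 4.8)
The plan is to deduce this exactly as the magnetic Gauss law corollary was deduced, namely as a consequence of Proposition~\ref{prop:discrete_casimir_general}. Fix an arbitrary vector $\bar{\eb}\in\R^{N_0}$ and take $\bar{\bb}=\mathbb{0}_{N_3}$ in the general form~\eqref{eq:discrete_casimir_general}. This produces the scalar functional
\begin{equation*}
C_{\bar{\eb}}(\ub)=\bar{\eb}^\top\left(\tLaBB^0(\Xib)^\top\MM_q\mathbbm{1}_{N_p}+\G^\top\tM_1\teb\right),
\end{equation*}
whose vanishing for every $\bar{\eb}$ is precisely the discrete electric Gauss law~\eqref{discreteGauss}.

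First I would record the (easy) converse to Proposition~\ref{prop:discrete_casimir_general}, i.e.~that every such $C_{\bar{\eb}}$ really is a discrete Casimir, not merely the candidate form of one. Since $C_{\bar{\eb}}$ is independent of $\Vb$ and $\tbb$, one has $D_{\Vb}C_{\bar{\eb}}=0$ and $D_{\tbb}C_{\bar{\eb}}=0$, while $D_{\teb}C_{\bar{\eb}}=\tM_1\G\bar{\eb}$ and, using $\nabla_{\xib}\tLam^0=\tLaB^1\G$ from~\eqref{diffop}, $D_{\Xib}C_{\bar{\eb}}=\MM_q\,\grad\tLaBB^0(\Xib)\,\bar{\eb}=\MM_q\tLaBB^1(\Xib)\G\bar{\eb}$. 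Substituting these four blocks into the rows of the Poisson matrix~\eqref{Poissonmatrix} and using $\C\G=0$ from~\eqref{discomp}, every row of $\mathbb{J}(\ub)\,DC_{\bar{\eb}}(\ub)$ collapses to zero; this is just the reverse reading of the computations already performed in the proof of Proposition~\ref{prop:discrete_casimir_general}. Hence $C_{\bar{\eb}}$ is a discrete Casimir.

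Next I would invoke that a Casimir is a constant of motion: along any solution of~\eqref{eq:equations_of_motion_poisson},
\begin{equation*}
\frac{\du}{\du t}C_{\bar{\eb}}(\ub)=\{C_{\bar{\eb}},\tilde{\mathcal{H}}_h\}_d=DC_{\bar{\eb}}(\ub)^\top\mathbb{J}(\ub)\,D\tilde{\mathcal{H}}_h=-\bigl(\mathbb{J}(\ub)\,DC_{\bar{\eb}}(\ub)\bigr)^\top D\tilde{\mathcal{H}}_h=0,
\end{equation*}
so $\bar{\eb}^\top\bigl(\tLaBB^0(\Xib)^\top\MM_q\mathbbm{1}_{N_p}+\G^\top\tM_1\teb\bigr)$ is conserved in time. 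Finally, since $\bar{\eb}\in\R^{N_0}$ was arbitrary — in particular one may let $\bar{\eb}$ run over the standard basis of $\R^{N_0}$ — the full $\R^{N_0}$-valued residual $\tLaBB^0(\Xib)^\top\MM_q\mathbbm{1}_{N_p}+\G^\top\tM_1\teb$ of~\eqref{discreteGauss} is conserved component-wise. Therefore, if it vanishes at $t=0$ it vanishes for all $t$, which is the claim.

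I do not expect a genuine obstacle here: the only points needing care are the short direct verification that the functionals~\eqref{eq:discrete_casimir_general} are Casimirs, and keeping the per-particle scalar weight matrices $\MM_q$, $\MM_m$ — which commute with the block-diagonal matrices $\NN(\Xib)$ and $\tLaBB^1(\Xib)$ — correctly placed in the block manipulations above.
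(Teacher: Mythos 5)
Your proposal is correct and follows essentially the same route as the paper, namely deducing the claim from Proposition~\ref{prop:discrete_casimir_general}. You are in fact somewhat more careful than the paper's one-line proof: you supply the converse verification that functionals of the form \eqref{eq:discrete_casimir_general} really are Casimirs (the proposition as stated only gives the \emph{necessary} form of a Casimir), and you let $\bar{\eb}$ range over the standard basis of $\R^{N_0}$ to obtain component-wise conservation of the residual, whereas the paper's choice $\bar{\eb}=\mathbbm{1}_{N_0}$ taken literally only yields conservation of the sum of its components.
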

\begin{proof}
This follows immediately from Proposition \ref{prop:discrete_casimir_general} setting $\bar{\eb} = \mathbbm{1}_{N_0}$ and $\bar{\bb}=\mathbb{0}_{N_3}$, since this leads to the discrete Casimir $\G^\top \tM_1 \teb -\MM_q \tLaBB^0(\Xib)^\top \mathbb{1}_{N_p}$.
\end{proof}

\section{Time discretisation of the equations of motion}\label{sec:time}
For the GEMPIC method on Cartesian grids, two structure-preserving time propagation schemes that exploit the form 
\eqref{eq:equations_of_motion_poisson} have been proposed: In \cite{kraus2016gempic}, the discrete Hamiltonian \eqref{disHam} is split into five  parts $ \tilde{\mathcal{H}}_{h,i}$, $i=1,\ldots, 5$ so that each part of the equations $\dot \tub= \{\tub, \tilde{\mathcal{H}}_{h,i}\}$ yields explicit equations of motion and the discretisation preserves Gauss' law. This splitting was first proposed in \cite{he2015hamiltonian}. A second ansatz is to split system \eqref{eq:equations_of_motion_poisson} by an antisymmetric splitting of the Poisson matrix $\mathbb{J}$. Then, the resulting subsystems can be solved by a discrete gradient method yielding an energy-preserving time stepping scheme. In \cite{kormann2019}, two schemes are constructed this way: a semi-implicit scheme that does not preserve the electric Gauss law and a fully discrete scheme that preserves Gauss' law. The discrete gradient schemes readily extend to the curvilinear case as we will show in sections \ref{sec:time_avf} and \ref{sec:time_disgrad}. However, an explicit Hamiltonian splitting can no longer be constructed, since the coordinate directions do not decouple for non-diagonal coordinate transformations. Instead, we will construct a semi-implicit splitting that preserves Gauss' law in section \ref{sec:time_hs}.

\subsection{Charge conserving splitting}\label{sec:time_hs}
In this section, we consider a Hamiltonian splitting as in \cite{kraus2016gempic}, however, we only split into three parts,
\begin{align*} \tilde{\mathcal{H}}_h=\tilde{\mathcal{H}}_p+\tilde{\mathcal{H}}_E+\tilde{\mathcal{H}}_B
\end{align*}
with 
\begin{align*}
\tilde{\mathcal{H}}_p= \frac{1}{2}\Vb^\top \MM_p \Vb,\ \tilde{\mathcal{H}}_E=\frac{1}{2}\teb^\top \tM_1 \teb, \ \tilde{\mathcal{H}}_B=\frac{1}{2}\tbb^\top \tM_2 \tbb.
\end{align*}
Thus, we obtain the three subsystems
\begin{align*}
\dot{\mathbf{u}}=\{\mathbf{u},\tilde{\mathcal{H}}_p \},\  \dot{\mathbf{u}}=\{\mathbf{u},\tilde{\mathcal{H}}_E \}, \ \dot{\mathbf{u}}=\{\mathbf{u},\tilde{\mathcal{H}}_B \}.
\end{align*}
The subsystems for $\tilde{\mathcal{H}}_E$ and $\tilde{\mathcal{H}}_B$ are solved exactly and, then, evaluated at the discrete time steps $t^n=n \Delta t$. Let us denote $\tub(t^n)=:\tub^n$. 
 
For $\tilde{\mathcal{H}}_E$, the equations of motion are
\begin{equation}\label{HE}
\begin{aligned}
\dot{\Xib}&=0, &&\dot{\Vb}=\MM_p^{-1}\MM_q \NN (\xi) \tLaBB^1(\Xib) \teb,
\\
\dot{\teb}&=0,  &&\dot{\tbb}=-\C \teb
\end{aligned}
\end{equation}
and the time discrete version reads
\begin{align*}
\Xib^{n+1}&=\Xib^n, &&\Vb^{n+1}=\Vb^n+\Delta t\MM_p^{-1}\MM_q \NN (\Xib^n) \tLaBB^1(\Xib^n) \teb^n,
\\
\teb^{n+1}&=\teb^n, && \tbb^{n+1}=\tbb^n-\Delta t \C \teb^n.
\end{align*}
For $\tilde{\mathcal{H}}_B$, we get 
\begin{equation}
\label{HBe}
\dot{\Xib}=0, \quad \dot{\Vb}=0, \quad \tM_1\dot{\teb}=\C^\top \tM_2 \tbb, \quad \dot{\tbb}=0,
\end{equation}
which leads to the discretisation
\begin{align*}
\Xib^{n+1}=\Xib^n, \quad \Vb^{n+1}=\Vb^n, \quad
\tM_1 \teb^{n+1}=\tM_1 \teb^n+\Delta t \C^\top \tM_2 \tbb^n, \quad 
\tbb^{n+1}=\tbb^n.
\end{align*}
For $\tilde{\mathcal{H}}_p$, we obtain the following equations 
\begin{equation}\label{Hp}
\begin{aligned}
\dot{\Xib}&=\NN(\Xib)^\top\Vb, &&\dot{\Vb}= \MM_p^{-1}\MM_q \NN(\Xib) \tBB(\Xib,\tbb) \NN(\Xib)^\top\Vb ,
\\
\tM_1 \dot{\teb}&= -\tLaBB^1(\Xib)^\top \NN(\Xib)^\top \MM_q  \Vb, &&\dot{\tbb}=0.
\end{aligned}
\end{equation}
Here, we get the analytic solution
\begin{align*}
\Xib(\Delta t)&=\Xib(0)+ \int^{\Delta t}_0 \NN(\Xib(t))^\top \Vb(t) \ \du t,\\
\Vb(\Delta t)&=\Vb(0)+\MM_p^{-1}\MM_q \int^{\Delta t}_0  \NN(\Xib(t))\tBB(\Xib(t),\tbb(0)) \NN(\Xib(t))^\top \Vb(t)  \ \du t,\\
\tM_1 \teb(\Delta t)&= \tM_1 \teb(0)-\int^{\Delta t}_0   \tLaBB^1(\Xib(t))^\top \NN(\Xib(t))^\top \MM_q  \Vb(t)  \ \du t\\
\tbb(\Delta t)&=\tbb(0).
\end{align*}
This system is implicit in the particle coordinates $(\Xib, \Vb)$ but decouples between different particles. It is not possible to solve the resulting $6\times 6$ systems explicitly. Therefore, the kinetic energy part was further split into the three directions in \cite{kraus2016gempic}. Such a splitting yields explicit equations only if the Jacobi matrix of the coordinate transformation is diagonal and constant. Since this is generally not true, we keep the kinetic part together.

In order to resolve the non-linearity caused by the dependence of $\NN$ on $\Xib$, we need to introduce some approximation, which should conserve the symplectic structure. In \cite{kormann2019} it has been shown that a Gauss-conserving discretisation can be obtained when using the same constant velocity for both the position and the current update. 
%
That is why, we solve the particle equations with the symplectic midpoint method in a fix point iteration using a predictor-corrector scheme and then, compute the current for the update of the electric field with a line integral for $\tLaBB^1(\Xib(t))$ and the velocity from the position update. This results in the following system,
\begin{subequations}\label{hs}
\begin{align}\label{hs1}
\Xib^{n+1}=&\Xib^n+ \Delta t \NN^\top\left(\overline{\Xib}\right)\overline{\Vb},
\\\notag
\Vb^{n+1}=&\Vb^{n}+ \Delta t \MM_m^{-1}\MM_q \NN\left(\overline{\Xib}\right) \tBB\left(\overline{\Xib},\tbb^n\right) \NN^\top\left(\overline{\Xib}\right)\overline{\Vb},
\\\label{hs3}
\tM_1 \teb^{n+1}= &\tM_1 \teb^n-\int^{t^{n+1}}_{t^n}   \tLaBB^1(\Xib(\tau))^\top \du\tau \MM_q \NN^\top\left(\overline{\Xib}\right)\overline{\Vb},
\\ \notag
\tbb^{n+1}=&\tbb^n,
\end{align}
\end{subequations}
where $\overline{\Xib}=\frac{\Xib^n+\Xib^{n+1}}{2}, \overline{\Vb}=\frac{\Vb^n+\Vb^{n+1}}{2}$ and $\Xib(\tau)= \frac{(t^{n+1}-\tau)\Xib^n+(\tau-t^n)\Xib^{n+1}}{\Delta t}$.

\begin{prop}\label{gaussconservation} For the proposed splitting, 
Gauss' law is preserved over time if it is satisfied initially and \eqref{Hp} is discretised as in \eqref{hs}.
\end{prop}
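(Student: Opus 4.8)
The plan is to reduce the claim to the statement that each of the three substeps of the splitting leaves the discrete electric Gauss-law residual
\[
r(\ub):=\G^\top\tM_1\teb+\MM_q\,\tLaBB^0(\Xib)^\top\mathbbm{1}_{N_p}
\]
invariant; then any composition of the substeps does so too, and $r(\ub^0)=0$ forces $r(\ub^n)=0$ for all $n$, which is exactly \eqref{discreteGauss}. For the $\tilde{\mathcal{H}}_E$ substep both $\Xib$ and $\teb$ are frozen, so $r$ is trivially unchanged. For the $\tilde{\mathcal{H}}_B$ substep $\Xib$ is again frozen while $\tM_1\teb$ is incremented by a multiple of $\C^\top\tM_2\tbb^n$; multiplying by $\G^\top$ and using $\C\G=0$ from \eqref{discomp} gives $\G^\top\C^\top\tM_2\tbb^n=(\C\G)^\top\tM_2\tbb^n=0$, so $r$ is unchanged there as well.

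It remains to treat the nonlinear substep \eqref{hs}, which I would do by computing the increments of the two summands of $r$ and showing they cancel. Applying $\G^\top$ to \eqref{hs3} and pulling the constant matrix $\G$ inside the time integral, together with the first relation of \eqref{diffop} applied row-block by row-block (so that $\tLaBB^1(\Xib(\tau))\,\G$ becomes the $3N_p\times N_0$ matrix with block rows $\nabla_\xib\tLam^0(\xib_p(\tau))$), one obtains
\[
\G^\top\tM_1\bigl(\teb^{n+1}-\teb^n\bigr)=-\Bigl(\int_{t^n}^{t^{n+1}}\nabla_\xib\tLaBB^0(\Xib(\tau))\,\du\tau\Bigr)^{\!\top}\MM_q\,\NN^\top\!\bigl(\overline{\Xib}\bigr)\overline{\Vb}.
\]
Now the interpolation path $\Xib(\tau)$ is the straight segment from $\Xib^n$ to $\Xib^{n+1}$, so its derivative is the constant $\tfrac1{\Delta t}(\Xib^{n+1}-\Xib^n)$, which by \eqref{hs1} equals precisely $\NN^\top(\overline{\Xib})\overline{\Vb}$. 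Hence block-column $p$ of the right-hand side equals $-\omega_pq_p\int_{t^n}^{t^{n+1}}\bigl(\nabla_\xib\tLam^0(\xib_p(\tau))\bigr)^\top\dot{\xib}_p(\tau)\,\du\tau=-\omega_pq_p\int_{t^n}^{t^{n+1}}\tfrac{\du}{\du\tau}\tLam^0(\xib_p(\tau))^\top\,\du\tau$, and the fundamental theorem of calculus collapses this to $-\omega_pq_p\bigl(\tLam^0(\xib_p^{n+1})-\tLam^0(\xib_p^n)\bigr)^\top$. Summing over $p$ gives $\G^\top\tM_1(\teb^{n+1}-\teb^n)=-\MM_q\bigl(\tLaBB^0(\Xib^{n+1})^\top-\tLaBB^0(\Xib^n)^\top\bigr)\mathbbm{1}_{N_p}$, i.e.\ $r(\ub^{n+1})=r(\ub^n)$.

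The step I expect to be the crux is this last collapse: it works precisely because the \emph{same} averaged velocity $\NN^\top(\overline{\Xib})\overline{\Vb}$ drives the position update \eqref{hs1} and is contracted against the line integral of $\tLaBB^1$ in \eqref{hs3}, and because the path used in that integral is the linear interpolant of $\Xib^n,\Xib^{n+1}$, whose derivative reproduces that averaged velocity exactly. Under these two compatibility conditions the current increment is an exact discrete charge flux, and that is what makes the line integral of $\nabla_\xib\tLaBB^0$ telescope; this is the curvilinear counterpart of the Cartesian argument in \cite{kormann2019}. Combining the three invariance statements, the composed scheme preserves $r$, so if Gauss' law \eqref{discreteGauss} holds at $t^0$ it holds at every $t^n$, which is the assertion.
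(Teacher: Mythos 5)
Your proposal is correct and follows essentially the same route as the paper's proof: the $\tilde{\mathcal{H}}_B$ step is handled via the complex property $\C\G=0$, and the crucial $\tilde{\mathcal{H}}_p$ step uses that the straight-line interpolant's derivative coincides with the averaged velocity driving both \eqref{hs1} and \eqref{hs3}, so that $\G^\top\tLaBB^1(\Xib(\tau))^\top\dot\Xib(\tau)=\frac{\du}{\du\tau}\tLaBB^0(\Xib(\tau))^\top\mathbbm{1}_{N_p}$ telescopes by the fundamental theorem of calculus. Your write-up merely makes explicit (per particle block) what the paper compresses into its chain-rule identity.
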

\begin{proof}
First, we identify the two splitting steps in which the electric field is changed. In $\mathcal{H}_B$, the update of the electric field \eqref{HBe} multiplied by $\G^\top$ stays constant due to the discrete complex property \eqref{discomp}. For $\mathcal{H}_p$, we multiply \eqref{hs3} with $\G^\top$ and plug in the position formula \eqref{hs1} and use that $\frac{\du\Xib}{\du\tau}=\frac{\Xib^{n+1}-\Xib^n}{\Delta t}$ is constant in time,
\begin{align*}
\G^\top\tM_1\teb^{n+1}-\G^\top\tM_1\teb^{n}&=- \int_{t^n}^{t^{n+1}} \G^\top \tLaBB^1(\Xib(\tau))^\top  \du\tau \MM_q  \frac{\Xib^{n+1}-\Xib^n}{\Delta t}\\
&=- \int_{t^n}^{t^{n+1}} \MM_q \G^\top \tLaBB^1(\Xib(\tau))^\top \frac{\du\Xib(\tau)}{\du\tau} \du\tau.
\end{align*}
At last, we use the chain rule , $\frac{\du \tLaBB^0(\Xib(\tau))^\top}{\du\tau} \mathbb{1}_{N_p}= \G^\top \tLaBB^1(\Xib(\tau))^\top \frac{\du\Xib(\tau)}{\du\tau} $, and obtain 
\begin{align*}
\G^\top\tM_1\teb^{n+1}-\G^\top\tM_1\teb^{n}&= -\int_{t^n}^{t^{n+1}} \MM_q \frac{\du \tLaBB^0(\Xib(\tau))^\top}{\du\tau} \mathbb{1}_{N_p}  \du\tau\\
&= -\left(\MM_q  \tLaBB^0(\Xib^{n+1})^\top \mathbb{1}_{N_p}-\MM_q  \tLaBB^0(\Xib^{n})^\top \mathbb{1}_{N_p}\right).
\end{align*}
\end{proof}
Note that the source-free Maxwell equations are solved in two different splitting steps which causes a restriction on the time step (cf.~\cite[Appendix A.2]{kormann2019}. For the simulation results of the Hamiltonian splitting we use the acronym HS.

\subsection{Energy conserving antisymmetric splitting}\label{sec:time_avf}
Next, we consider energy-conserving time discretisations constructed as discrete gradients \cite{quispel1996discrete}. 

\begin{theorem}
Let us consider a system of ordinary differential equations of the form
\begin{align*}
\dot \tub= \mathcal{J}(\tub) D\tilde{\mathcal{H}}_h(\tub)
\end{align*}
with a skew-symmetric matrix $\mathcal{J}$. Then, the discrete gradient discretisation of the form
\begin{align*}
\frac{\tub^{n+1}-\tub^n}{\Delta t}= \bar{\mathcal{J}}(\tub^{n+1},\tub^n) \overline{D\tilde{\mathcal{H}}}_h(\tub^{n+1},\tub^n)
\end{align*}
is energy conserving if $\bar{\mathcal{J}}(\tub^{n+1},\tub^n)$ is skew-symmetric.
\end{theorem}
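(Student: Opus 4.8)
The plan is to run exactly the standard discrete-gradient energy argument of \cite{quispel1996discrete,kormann2019}, now for the curvilinear Poisson matrix. The only structural fact I will use about $\overline{D\tilde{\mathcal{H}}}_h$ is its defining mean-value property as a discrete gradient, namely
\begin{align*}
\overline{D\tilde{\mathcal{H}}}_h(\tub^{n+1},\tub^n)^\top (\tub^{n+1}-\tub^n) = \tilde{\mathcal{H}}_h(\tub^{n+1})-\tilde{\mathcal{H}}_h(\tub^n),
\end{align*}
together with consistency $\overline{D\tilde{\mathcal{H}}}_h(\tub,\tub)=D\tilde{\mathcal{H}}_h(\tub)$; this should be recalled explicitly at the start of the proof, since the assertion is only meaningful once such a discrete gradient is posited.

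First I would express the energy change over one step as the telescoping difference $\tilde{\mathcal{H}}_h(\tub^{n+1})-\tilde{\mathcal{H}}_h(\tub^n)$ and rewrite it, via the mean-value property above, as the inner product of the discrete gradient with the increment $\tub^{n+1}-\tub^n$. Next I would substitute the scheme itself, $\tub^{n+1}-\tub^n=\Delta t\,\bar{\mathcal{J}}(\tub^{n+1},\tub^n)\,\overline{D\tilde{\mathcal{H}}}_h(\tub^{n+1},\tub^n)$, into that inner product, obtaining
\begin{align*}
\tilde{\mathcal{H}}_h(\tub^{n+1})-\tilde{\mathcal{H}}_h(\tub^n) = \Delta t\, \overline{D\tilde{\mathcal{H}}}_h(\tub^{n+1},\tub^n)^\top \bar{\mathcal{J}}(\tub^{n+1},\tub^n)\, \overline{D\tilde{\mathcal{H}}}_h(\tub^{n+1},\tub^n).
\end{align*}
Finally, since $\bar{\mathcal{J}}$ is skew-symmetric, the right-hand side is a quadratic form $v^\top \bar{\mathcal{J}}\, v$ with $v=\overline{D\tilde{\mathcal{H}}}_h(\tub^{n+1},\tub^n)$; such a form equals its own transpose and hence its own negative, so it vanishes. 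Therefore $\tilde{\mathcal{H}}_h(\tub^{n+1})=\tilde{\mathcal{H}}_h(\tub^n)$, which is the claimed energy conservation.

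There is essentially no obstacle in this theorem: the argument is purely algebraic, using only bilinearity of the Euclidean inner product and the identity $v^\top A v=0$ for skew-symmetric $A$, and it is insensitive to the dependence of $\mathcal{J}$ on $\tub$ (here on $\Xib$ and $\tbb$). The genuine work is deferred to the following sections: one must actually exhibit a discrete gradient $\overline{D\tilde{\mathcal{H}}}_h$ obeying the mean-value property and an antisymmetric splitting of $\mathbb{J}$ from \eqref{Poissonmatrix} whose pieces $\bar{\mathcal{J}}$ remain skew-symmetric and admit a solvable implicit step; for the quadratic Hamiltonian \eqref{disHam} the averaged vector field / Gonzalez discrete gradient reduces to midpoint-type averages, so this is straightforward but should be verified there rather than here.
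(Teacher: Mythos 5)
Your proposal is correct and follows essentially the same route as the paper: write the energy increment via the mean-value property of the discrete gradient, substitute the scheme, and observe that the resulting quadratic form $v^\top \bar{\mathcal{J}} v$ vanishes by skew-symmetry. Your explicit statement of the defining mean-value property $\overline{D\tilde{\mathcal{H}}}_h(\tub^{n+1},\tub^n)^\top(\tub^{n+1}-\tub^n)=\tilde{\mathcal{H}}_h(\tub^{n+1})-\tilde{\mathcal{H}}_h(\tub^n)$ is a welcome clarification that the paper leaves implicit.
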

\begin{proof}
The energy variation in one time step is defined as
\begin{align*}
\tilde{\mathcal{H}}_h^{n+1}-\tilde{\mathcal{H}}_h^{n}&= D\tilde{\mathcal{H}}_h(\tub^{n+1},\tub^n)^\top \left(\tub^{n+1}-\tub^n\right).
\end{align*}
Now, we insert the discretisation from above and use the skew-symmetry of $\bar{\mathcal{J}}$,
\begin{align*}
\tilde{\mathcal{H}}_h^{n+1}-\tilde{\mathcal{H}}_h^{n}&= D\tilde{\mathcal{H}}_h(\tub^{n+1},\tub^n)^\top \left(\Delta t\bar{\mathcal{J}}(\tub^{n+1},\tub^n)(\tub) D\tilde{\mathcal{H}}_h(\tub^{n+1},\tub^n) \right)\\
&=-\Delta t  D\tilde{\mathcal{H}}_h(\tub^{n+1},\tub^n)^\top \bar{\mathcal{J}}(\tub^{n+1}, \tub^n) D\tilde{\mathcal{H}}_h(\tub^{n+1},\tub^n)=0.
\end{align*}
\end{proof}
Several ways to construct discrete gradients have been proposed in the literature \cite{Itoh85,Gonzalez96,celledoni2012preserving}. However, in our case the Hamiltonian is quadratic---and $D\tilde{\mathcal{H}}_h$ linear---so that all methods simplify to the second order midpoint rule.  Hence, this leaves us with the choice how to discretise $\mathcal{J}$. Moreover, we follow \cite{kormann2019} and split the discrete Poisson matrix, keeping its skew-symmetry in each subsystem. 
We split the Poisson matrix into four antisymmetric parts and obtain the following subsystems,
\begin{equation}
\begin{aligned}\label{antisym}
&\text{ system 1: } 
\dot{\Xib}=\NN(\Xib)^\top\Vb,
\\
&\text{ system 2: }
\dot{\Vb}=\MM_m^{-1}\MM_q \NN(\Xib) \tBB(\Xib,\tbb) \NN(\Xib)^\top \Vb,\\
&\text{ system 3: }
 \dot{\tbb}=-\C \teb, \ \tM_1 \dot{\teb}=\C^\top \tM_2 \tbb,
\\
&\text{ system 4: }
\dot{\Vb}=\MM_m^{-1}\MM_q \NN (\Xib) \tLaBB^1(\Xib) \teb, \
\tM_1 \dot{\teb} = - \tLaBB^1(\Xib)^\top \NN(\Xib)^\top \MM_q  \Vb.
\end{aligned}
\end{equation}
In the first system, the element of the Poisson matrix $\NN(\Xib)$ is changing over time and needs to be approximated. We use a Crank-Nicholson method to maintain the second order accuracy and solve the system iteratively with a predictor-corrector scheme,  
\begin{align*}
\Xib^{n+1}=&\Xib^n+\Delta t \frac{\NN(\Xib^n)^\top+ \NN(\Xib^{n+1})^\top}{2} \Vb^n.
\end{align*}
Note that the system is block-diagonal and, hence, only coupling the positions of one particle at a time.

In the other three systems, the Poisson matrix is constant over time and we only use the midpoint rule to discretise the $D\tilde{\mathcal{H}}_h$ part. So, the equation for the second system reads
\begin{equation}
\begin{aligned}\label{vavf}
\left(\mathbb{I}-\frac{\Delta t}{2} \MM_m^{-1}\MM_q \NN\tBB \NN^\top \right)\Vb^{n+1}&=\left(\mathbb{I}+\frac{\Delta t}{2} \MM_m^{-1}\MM_q \NN \tBB \NN^\top\right) \Vb^{n},
\end{aligned}
\end{equation}
where for every particle the inverse of the $3\times3$ Matrix on the left hand side can be exactly calculated.

With the same method, the matrix form of the equations for system 3 reads
\begin{align*}
\begin{pmatrix}
\tM_1 & -\frac{\Delta t}{2} \C^\top  \tM_2\\
+\frac{\Delta t}{2} \C& \mathbb{I}
\end{pmatrix} \begin{pmatrix}
\teb^{n+1}\\
\tbb^{n+1}
\end{pmatrix}= \begin{pmatrix}
\tM_1 & +\frac{\Delta t}{2} \C^\top  \tM_2\\
-\frac{\Delta t}{2} \C& \mathbb{I}
\end{pmatrix} \begin{pmatrix}
\teb^{n}\\
\tbb^{n}
\end{pmatrix}.
\end{align*}
With the Schur complement $S=\tM_1+\frac{\Delta t^2}{4} \C^\top \tM_2 \C$, we get the following decoupled system,
\begin{equation}
\begin{aligned}\label{ebavf}
\eb^{n+1}&=S^{-1}\left((\tM_1-\frac{\Delta t^2}{4} \C^\top \tM_2 \C)\eb^n+\Delta t \C^\top \tM_2 \bb^n \right),\\
\bb^{n+1}&=\bb^n-\frac{\Delta t}{2} \C(\eb^n+\eb^{n+1}).
\end{aligned}
\end{equation}
Finally, system 4 is discretised in matrix-vector notation as
\begin{align*}
A_{-} \begin{pmatrix}
\Vb^{n+1}\\
\teb^{n+1}
\end{pmatrix} = A_{+} \begin{pmatrix}
\Vb^{n}\\
\teb^{n}
\end{pmatrix}, \text{ with } A_{\pm}= \begin{pmatrix}
\mathbb{I}&  \pm\frac{\Delta t}{2}\MM_m^{-1}\MM_q \NN \tLaBB^1\\
\mp \frac{\Delta t}{2} (\tLaBB^1)^\top \NN^\top \MM_q   &\tM_1 
\end{pmatrix} 
\end{align*}
We use the Schur complement $S=\tM_1+\frac{\Delta t^2}{4}\MM_q^2 \MM_m^{-1}(\tLaBB^1)^\top \NN^\top \NN\tLaBB^1$ to decouple the equations for $\Vb$ and $\eb$, which yields the following system that is explicit in the particles and linearly implicit in the electric field
\begin{align*}
\eb^{n+1}&=S^{-1}\left((\tM_1-\frac{\Delta t^2}{4} \MM_q^2 \MM_m^{-1} \MM^\star)\eb^n-\Delta t (\tLaBB^1)^\top \NN^\top  \MM_q \Vb^n \right),\\
\Vb^{n+1}&=\Vb^n+\frac{\Delta t}{2} \MM_m^{-1} \MM_q \NN \tLaBB^1(\eb^n+\eb^{n+1}).
\end{align*}
Here, we introduced the so-called particle mass matrix, $\MM^\star:=(\tLaBB^1)^\top \NN^\top \NN \tLaBB^1$.

When we look at the charge conservation of the system, we notice that the conservation of Gauss' law gets lost, since the current is not computed in the same splitting step as the position update which is pointed out in \cite{kormann2019}. The simulation results of this energy conserving discrete gradient method are labelled as DisGradE.

\subsection{Energy and charge conserving antisymmetric splitting}\label{sec:time_disgrad}

In this section, we change the splitting and solve systems 1 and 4 from the antisymmetric splitting \eqref{antisym} together with the goal of devising a discrete gradient that also preserves Gauss' law. The three subsystems are then given as
\begin{align*}
&\text{ system 1: }
\dot{\Xib}=\NN(\Xib)^\top\Vb,
\dot{\Vb}=\MM_p^{-1}\MM_q \NN (\Xib) \tLaBB^1(\Xib) \teb, \
\tM_1 \dot{\teb} = - \tLaBB^1(\Xib)^\top \NN(\Xib)^\top \MM_q  \Vb,\\
&\text{ system 2: }
\dot{\Vb}=\MM_p^{-1}\MM_q \NN(\Xib) \tBB(\Xib,\tbb) \NN(\Xib)^\top \Vb,\\
&\text{ system 3: }
 \dot{\tbb}=-\C \teb, \ \tM_1 \dot{\teb}=\C^\top \tM_2 \tbb.
\end{align*}
For the first system, we have to construct a discretisation of the partial Poisson matrix that is antisymmetric to maintain the energy conservation. Moreover, we are aiming at an approximation that preserves Gauss law. Both goals can be achieved with the following discretisation,
\begin{subequations}\label{disgrad}
\begin{align}\notag
\frac{\Xib^{n+1}-\Xib^n}{\Delta t}&=\frac{\NN(\Xib^n)^\top+\NN(\Xib^{n+1})^\top}{2} \frac{\Vb^{n+1}+\Vb^n}{2},
\\\label{disgrad2}
\frac{\Vb^{n+1}-\Vb^n}{\Delta t}&=\MM_p^{-1}\MM_q \frac{\NN(\Xib^n)+\NN(\Xib^{n+1})}{2} \frac{1}{\Delta t}\int_{t^n}^{t^{n+1}} \tLaBB^1(\Xib(\tau))  \du\tau \frac{\teb^{n+1}+\teb^n}{2},
\\\label{disgrad3}
\frac{\tM_1 \teb^{n+1}-\tM_1 \teb^n}{\Delta t} &= - \frac{1}{\Delta t} \int_{t^n}^{t^{n+1}} \tLaBB^1(\Xib(\tau))^\top  \du\tau  \frac{\NN(\Xib^n)^\top+\NN(\Xib^{n+1})^\top}{2} \MM_q\frac{\Vb^{n+1}+\Vb^n}{2},
\\\notag
\tbb ^{n+1}&=\tbb ^n.
\end{align}
\end{subequations}
Since the system \eqref{disgrad} is implicit, it has to be solved iteratively: We first loop over the particle position and velocity and then, update the electric field with the computed current. The whole system is looped over in a fix point iteration for the electric field.

The last two systems are still solved as in \eqref{vavf} and \eqref{ebavf}. 
So, we look at the conservation properties of this splitting.
\begin{prop}
The discrete energy of the splitting defined by \eqref{disgrad}, \eqref{vavf} and \eqref{ebavf} is conserved.
\end{prop}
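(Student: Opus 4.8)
The plan is to reduce the energy conservation of the full splitting to the energy conservation of each of its three subsystems, and then verify the latter one subsystem at a time. Since the composition of energy-preserving maps is energy-preserving, it suffices to show that each of systems 1, 2, 3 conserves the discrete Hamiltonian $\tilde{\mathcal H}_h$ from \eqref{disHam}. Systems 2 and 3 are handled by the discrete gradient theorem above: each is of the form $\dot\tub=\mathcal J_i(\tub)D\tilde{\mathcal H}_h$ with a \emph{constant} skew-symmetric block $\mathcal J_i$, and in both \eqref{vavf} and \eqref{ebavf} the update is exactly the midpoint rule $\frac{\tub^{n+1}-\tub^n}{\Delta t}=\mathcal J_i\,\frac{D\tilde{\mathcal H}_h(\tub^{n+1})+D\tilde{\mathcal H}_h(\tub^n)}{2}$ (using that $D\tilde{\mathcal H}_h$ is linear, so the midpoint evaluation and the averaged-gradient coincide); hence the theorem applies verbatim and $\tilde{\mathcal H}_h$ is unchanged by these two steps. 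Note that in each of these subsystems the variables not appearing (e.g.\ $\Xib$ in system 2 and 3, $\tbb$ and $\Vb$ in system 3) are held fixed, so the corresponding pieces of $\tilde{\mathcal H}_h$ are trivially conserved.

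The substance is system 1 with the discretisation \eqref{disgrad}. Here $\tbb$ is frozen, so the magnetic field energy $\tfrac12\tbb^\top\tM_2\tbb$ is constant and we only need $\tfrac12\Vb^\top\MM_m\Vb+\tfrac12\teb^\top\tM_1\teb$ to be invariant. The strategy is the standard telescoping identity: write
\begin{align*}
\tfrac12(\Vb^{n+1})^\top\MM_m\Vb^{n+1}-\tfrac12(\Vb^n)^\top\MM_m\Vb^n
&=(\Vb^{n+1}-\Vb^n)^\top\MM_m\,\overline{\Vb},\\
\tfrac12(\teb^{n+1})^\top\tM_1\teb^{n+1}-\tfrac12(\teb^n)^\top\tM_1\teb^n
&=(\teb^{n+1}-\teb^n)^\top\tM_1\,\overline{\teb},
\end{align*}
with $\overline{\Vb}=\tfrac12(\Vb^{n+1}+\Vb^n)$, $\overline{\teb}=\tfrac12(\teb^{n+1}+\teb^n)$ (valid since $\MM_m,\tM_1$ are symmetric). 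Substituting \eqref{disgrad2} into the first line gives $\Delta t\,\overline{\Vb}^\top\MM_q\,\tfrac{\NN(\Xib^n)+\NN(\Xib^{n+1})}{2}\,\big(\tfrac1{\Delta t}\int_{t^n}^{t^{n+1}}\tLaBB^1(\Xib(\tau))\du\tau\big)\,\overline{\teb}$, where $\MM_m^{-1}\MM_q$ became $\MM_q$ after contracting with $\MM_m$. Substituting \eqref{disgrad3} into the second line gives exactly the negative of the same scalar, because $\big(\tfrac1{\Delta t}\int\tLaBB^1(\Xib(\tau))\du\tau\big)^\top=\tfrac1{\Delta t}\int\tLaBB^1(\Xib(\tau))^\top\du\tau$, the matrix $\tfrac{\NN(\Xib^n)^\top+\NN(\Xib^{n+1})^\top}{2}$ is the transpose of $\tfrac{\NN(\Xib^n)+\NN(\Xib^{n+1})}{2}$, and $\MM_q$ is symmetric; thus the two contributions cancel and the kinetic-plus-electric energy is conserved across system 1. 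This is precisely the point of the averaged-in-$\tau$ basis matrix and the symmetrised $\NN$: it makes the Poisson-type coupling block between the $\Vb$- and $\teb$-updates exactly skew-symmetric (transpose of one equation's coupling operator equals minus the other's), so the abstract discrete-gradient argument goes through even though $\NN$ and $\tLaBB^1$ depend on $\Xib$.

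The main obstacle—and the only nonroutine point—is confirming that the $\Xib$-dependent coupling operator in \eqref{disgrad2}, namely $R:=\tfrac{\NN(\Xib^n)+\NN(\Xib^{n+1})}{2}\big(\tfrac1{\Delta t}\int_{t^n}^{t^{n+1}}\tLaBB^1(\Xib(\tau))\du\tau\big)$, and the corresponding operator in \eqref{disgrad3} are genuine transposes of one another; once this is in hand the cancellation is immediate. One should double-check that the $\Xib$-update equation \eqref{disgrad}(first line) plays no role in the energy balance—indeed it does not, since $\tilde{\mathcal H}_h$ is independent of $\Xib$—and that the predictor-corrector fixed-point iteration is assumed to have converged so that \eqref{disgrad} holds exactly. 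I would close by remarking that energy conservation of the composite scheme follows because $\tilde{\mathcal H}_h^{n+1}=\tilde{\mathcal H}_h^n$ after each of the three substeps, hence after one full step.
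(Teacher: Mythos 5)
Your proof is correct and takes essentially the same route as the paper's: reduce to the three substeps, dispose of systems 2 and 3 via the discrete gradient theorem, and for system 1 contract \eqref{disgrad2} with $\overline{\Vb}^\top\MM_m$ and \eqref{disgrad3} with $\overline{\teb}^\top$ so that the two coupling terms cancel because the $\Xib$-dependent operators are exact transposes of one another. The only (immaterial) difference is presentational — you substitute both equations symmetrically and exhibit the cancellation, whereas the paper substitutes \eqref{disgrad3} into the right-hand side obtained from \eqref{disgrad2}.
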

\begin{proof}
As the systems 2 and 3 are still discretised with the discrete gradient method, they trivially conserve the discrete energy. Therefore, we only have to check the discretisation of the first system. The variation of the discrete energy in this splitting step is given by
\begin{align*}
\tilde{\mathcal{H}}_h^{n+1}-\tilde{\mathcal{H}}_h^{n}=
\frac{1}{2} [ (\Vb^{n+1})^\top\MM_p \Vb^{n+1}-(\Vb^n)^\top\MM_p\Vb^n + (\teb^{n+1})^\top\tM_1 \teb^{n+1}-(\teb^n)^\top\tM_1\teb^n ].
\end{align*}
We multiply \eqref{disgrad2} with $ (\Vb^{n+1}+\Vb^n)^\top\MM_p $ to find, after some reordering,
\begin{align*}
 &(\Vb^{n+1})^\top\MM_p \Vb^{n+1}-(\Vb^n)^\top\MM_p\Vb^n=\\ &\left(\int_{t^n}^{t^{n+1}} \tLaBB^1(\Xib(\tau))^\top  \du\tau \frac{\NN(\Xib^n)^\top+\NN(\Xib^{n+1})^\top}{2}\MM_q \frac{\Vb^{n+1}+\Vb^n}{2} \right)^\top (\teb^{n+1}+\teb^n).
\end{align*}
Using \eqref{disgrad3} to express the right hand side, yields
\begin{align*}
(\Vb^{n+1})^\top\MM_p \Vb^{n+1}-(\Vb^n)^\top\MM_p\Vb^n&=-\left( \tM_1 \teb^{n+1}-\tM_1 \teb^n \right)^\top (\teb^{n+1}+\teb^n)\\
&=-\left((\teb^{n+1})^\top\tM_1 \teb^{n+1}-(\teb^n)^\top\tM_1\teb^n\right).
\end{align*}
\end{proof}
\begin{prop}
For the splitting defined by \eqref{disgrad}, \eqref{vavf} and \eqref{ebavf},  Gauss' law is preserved over time if it is satisfied initially and system 1 is discretised as in \eqref{disgrad}.
\end{prop}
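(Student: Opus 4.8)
The plan is to reuse the mechanism of Proposition~\ref{gaussconservation} together with Proposition~\ref{prop:discrete_casimir_general}: the quantity $R(\tub):=\G^\top\tM_1\teb-\MM_q\tLaBB^0(\Xib)^\top\mathbbm{1}_{N_p}$ is a discrete Casimir of the full Poisson matrix, so it suffices to show that each of the three substeps of the new splitting leaves $R$ unchanged; then, if $R$ vanishes at $t^0$, it vanishes for all $n$, which is the discrete electric Gauss law.

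Systems~2 and~3 are immediate. System~2 (solved by \eqref{vavf}) changes neither $\Xib$ nor $\teb$, so $R$ is trivially invariant. System~3 (solved by \eqref{ebavf}) leaves $\Xib$ untouched and updates $\teb$ in the form $\tM_1\teb^{n+1}=\tM_1\teb^n+\Delta t\,\C^\top(\cdots)$; multiplying by $\G^\top$ and using the discrete complex property $\C\G=0$ from \eqref{discomp}, i.e.\ $\G^\top\C^\top=0$, gives $\G^\top\tM_1\teb^{n+1}=\G^\top\tM_1\teb^n$, so $R$ is again unchanged.

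All the content is in system~1, discretised as \eqref{disgrad}. I would multiply \eqref{disgrad3} from the left by $\G^\top$ and use two facts about the path $\Xib(\tau)=\frac{(t^{n+1}-\tau)\Xib^n+(\tau-t^n)\Xib^{n+1}}{\Delta t}$: first, $\frac{\du\Xib(\tau)}{\du\tau}=\frac{\Xib^{n+1}-\Xib^n}{\Delta t}$ is constant in $\tau$; second — this is the charge-conserving design point — by the position update in \eqref{disgrad} this same constant vector equals exactly the effective velocity $\frac{\NN(\Xib^n)^\top+\NN(\Xib^{n+1})^\top}{2}\frac{\Vb^{n+1}+\Vb^n}{2}$ that is contracted against the line integral of $\tLaBB^1$ in \eqref{disgrad3}, the diagonal particle-weight matrix $\MM_q$ commuting block-wise with $\NN$ and moving freely. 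Substituting yields
\begin{align*}
\G^\top\tM_1\teb^{n+1}-\G^\top\tM_1\teb^n=-\int_{t^n}^{t^{n+1}}\MM_q\,\G^\top\tLaBB^1(\Xib(\tau))^\top\,\frac{\du\Xib(\tau)}{\du\tau}\,\du\tau.
\end{align*}
Then the pointwise matrix form of the basis identity $\nabla_\xib\tLam^0=\tLaB^1\G$ from \eqref{diffop}, together with the chain rule, turns the integrand into a total $\tau$-derivative, $\frac{\du}{\du\tau}\big(\tLaBB^0(\Xib(\tau))^\top\mathbbm{1}_{N_p}\big)=\G^\top\tLaBB^1(\Xib(\tau))^\top\frac{\du\Xib(\tau)}{\du\tau}$, so by the fundamental theorem of calculus
\begin{align*}
\G^\top\tM_1\teb^{n+1}-\G^\top\tM_1\teb^n=-\Big(\MM_q\tLaBB^0(\Xib^{n+1})^\top\mathbbm{1}_{N_p}-\MM_q\tLaBB^0(\Xib^{n})^\top\mathbbm{1}_{N_p}\Big),
\end{align*}
i.e.\ $R(\tub^{n+1})=R(\tub^n)$ for system~1 as well.

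Combining the three substeps over a full time step gives $R(\tub^{n+1})=R(\tub^n)$, which is the claim. The only step requiring genuine care is the bookkeeping in system~1: one must check that the vector $\frac{\Xib^{n+1}-\Xib^n}{\Delta t}$ produced by the position update in \eqref{disgrad} is \emph{literally} the same object — including the averaged Jacobian factor $\frac{\NN(\Xib^n)^\top+\NN(\Xib^{n+1})^\top}{2}$ and the velocity midpoint $\frac{\Vb^{n+1}+\Vb^n}{2}$ — that multiplies the line integral of $\tLaBB^1$ in the current update \eqref{disgrad3}, since once that identification is in place the whole argument collapses to the chain rule plus the de Rham complex property, exactly as in Proposition~\ref{gaussconservation}. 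The velocity update \eqref{disgrad2} plays no role here, as $R$ depends only on $\Xib$ and $\teb$.
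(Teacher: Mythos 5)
Your proof is correct and follows essentially the same route as the paper, which disposes of the proposition by referring back to Prop.~\ref{gaussconservation} and noting that $\frac{\Xib^{n+1}-\Xib^n}{\Delta t}$ is constant in this step --- exactly the identification you single out as the key bookkeeping point, combined with $\G^\top\C^\top=0$ for system 3. One small remark: your final display correctly shows that the \emph{sum} $\G^\top\tM_1\teb+\MM_q\tLaBB^0(\Xib)^\top\mathbbm{1}_{N_p}$ is the conserved quantity (consistent with \eqref{divconstraints} and \eqref{eq:discrete_casimir_general}), so your opening definition of $R$ with a minus sign --- inherited from the sign slip in the paper's own corollary on the electric Gauss law --- should be adjusted accordingly.
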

\begin{proof}
This can be proved in the same way as Prop.~\ref{gaussconservation} with $\frac{\Xib^{n+1}-\Xib^n}{\Delta t}=$const. in this step.
\end{proof}
The simulation results of this charge and energy conserving discrete gradient method are labelled as DisGradEC.

Let us compare the building blocks of the DisGradEC method to the HS in terms of complexity. Since usually the number of particles is much larger than the number of degrees of freedom for the fields, the most expensive step is the evaluation of the line integral for the current deposition (cf.~\cite[Sec.~5.2.2.]{kormann2019}). However, for DisGradEC, this evaluation needs to be repeated in each nonlinear iteration. Moreover, for the source free Maxwell equations, the computation of the Schur complement for DisGradEC is more expensive than the explicit solution for HS.

The DisGradE scheme treats the source-free Maxwell equations in the same way as DisGradEC. The computationally most expensive part in this case is the assembly of the particle mass matrix. Both the evaluation of the line integral and the particle mass matrix depend to the sixth power on the spline order (cf.~the discussion in \cite{kormann2019}). In the case of DisGradEC, however, the constant depends on the number of cells crossed by the line integral as well as the number of nonlinear iterations, so that a general comparison is not possible.


\section{Numerical experiments}\label{sec:numerics}
We have implemented the 3D3V propagators in curvilinear coordinates as part of the SeLaLib library \cite{selalib} (see \cite[Appendix A]{artigues2019} for some details on the implementation). In this section, we reproduce two numerical test cases from \cite{kraus2016gempic} in a 3D setting in order to validate the code. Additionally, we perform an actual 3D simulation and compare the conservation properties of the different schemes. All numerical simulations are performed for electrons with a neutralising ion background. The particles are loaded with Sobol numbers and sampled uniformly in logical configuration space. In the absence of a coordinate transformation, the mass matrices are diagonal and can thus be inverted in Fourier space (cf.~\cite{kormann2019}). With a coordinate transformation, this is no longer true and we use a conjugate gradient solver instead that we precondition with the Fourier solver for the Cartesian case. The idea to use a direct solver on the Cartesian mesh as preconditioner for an iterative solver on the curvilinear mesh was borrowed from \cite{donatelli2015robust}. Note that this yields a solution to machine accuracy for the Cartesian case. Therefore, we switch off the preconditioner in this case in order to show comparable accuracy in the conservation properties, which depends on the solver tolerance.

\subsection{Coordinate transformation}
We use the following two periodic coordinate transformations for our tests, an orthogonal non-uniform transformation and the Colella transformation defined by the following functions,
\begin{align*}
F_{\text{orth}}(\xib)=\begin{pmatrix}
L\left(\xi_1+ \epsilon \sin(2 \pi\xi_1) \right) \\
L\left(\xi_2+ \epsilon \sin(2\pi \xi_2)\right) \\
L\xi_3
\end{pmatrix}
,\quad F_{\text{Colella}}(\xib)=\begin{pmatrix}
L\left(\xi_1+ \epsilon \sin(2 \pi\xi_1) \sin(2 \pi\xi_2)\right) \\
L\left(\xi_2+ \epsilon \sin(2 \pi\xi_1) \sin(2\pi \xi_2)\right) \\
L\xi_3
\end{pmatrix}.
\end{align*}
We choose $\epsilon < \frac{1}{2\pi}$ so that the inverse Jacobi matrix does not become singular.  Figure \ref{fig:meshes} visualises the $(x,y)$-part of the corresponding meshes for $\epsilon=0.1$.

\begin{figure}
\centering
 \begin{subfigure}{0.4\textwidth}
  \centering
  \includegraphics[width=.6\linewidth]{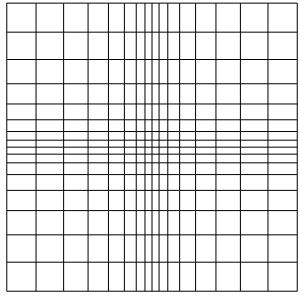}
  \caption{Orthogonal non-uniform mesh}
  \label{fig:orthogonalmesh}
 \end{subfigure}
 \hspace{0.5cm}
 \begin{subfigure}{0.4\textwidth}
  \centering
  \includegraphics[width=.6\linewidth]{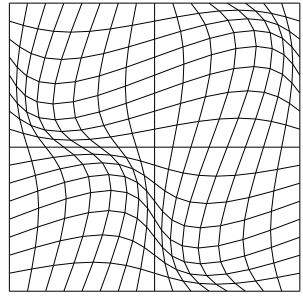}
  \caption{Colella mesh}  
  \label{fig:colellamesh}
 \end{subfigure}
 \caption{Orthogonal and Colella mesh for $\epsilon = 0.1$.}
 \label{fig:meshes}
\end{figure}

\subsection{Weibel instability}
Let us first consider the Weibel instability \cite{weibel1959spontaneously} with the 3D3V initial distribution
\begin{align*}
f(\xb,\vb,t=0)= \left(1+\alpha \cos( \mathbf{k} \cdot \xb)\right)\frac{1}{(2\pi)^\frac{3}{2} v_{th1}v_{th2}^2} \exp \left( -\frac{1}{2} \left( \frac{v_1^2}{v_{th1}^2}+\frac{v_2^2+v_3^2}{v_{th2}^2}\right) \right ),
\end{align*}
where  $\xb \in [0,L]^3, \vb \in \R^3$.
The magnetic field is initially set to $ \Bb(\xb,t=0)= \beta \cos(\mathbf{k} \cdot \xb) \mathbf{e}_3$
and
$\Eb(\xb,t=0)$ is calculated from Poisson's equation.
We choose the parameter as $v_{th1}=0.02/\sqrt{2}, v_{th2}=\sqrt{12}v_{th1},\mathbf{k}=(1.25,0,0)^\top,\alpha=0, L=\frac{2 \pi}{1.25}$ and $\beta=10^{-3}$.
For the numerical resolution, we take $1,600,000$ particles, $32\times4\times2$ grid cells, spline degrees $(3,2,1)$ and for the  iterative solver a tolerance of $10^{-13}$, for the non-linear iteration in DisGradEC a tolerance of  $10^{-12}$. 
These parameters are comparable to the 1D2V settings in \cite{kraus2016gempic}. Only $\beta$ is chosen one magnitude larger so that the initial growth of the magnetic field is higher than the effects caused by the particle noise at the chosen resolution. 

The time step of the Hamiltonian splitting is set to $\Delta t = 0.025$, which is close to the numerical stability limit of these methods. For the discrete gradient schemes, where each time step is considerably more expensive, we increase the time step to $\Delta t = 0.05$. 

\begin{figure}
\centering
 \begin{subfigure}{0.47\textwidth}
  \centering
  \includegraphics[width=\linewidth]{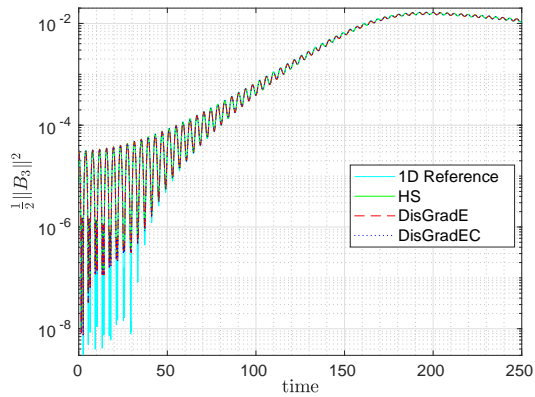}
  \caption{Orthogonal non-uniform mesh}
 \label{fig:magneticfield1}
 \end{subfigure}
 \hfill
 \begin{subfigure}{0.47\textwidth}
  \centering
  \includegraphics[width=\linewidth]{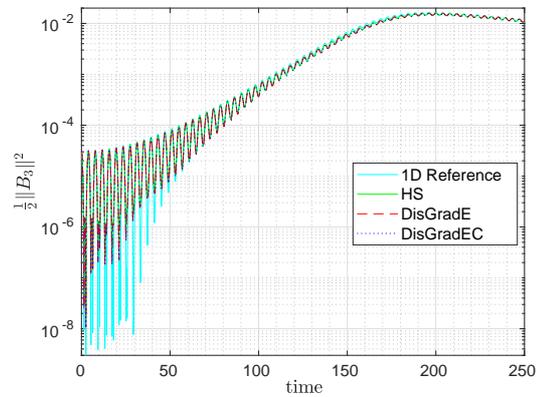}
\caption{Colella mesh}
\label{fig:magneticfield2}
 \end{subfigure}
 \caption{Weibel instability: Growth rate for magnetic field with time step $\Delta t=0.025$ for HS and $\Delta t=0.05$ for DisGradE, DisgradEC and distortion parameter $\epsilon=0.1$.} 
\end{figure}
Figure \ref{fig:magneticfield1} shows the third component of the magnetic field energy as a function of time for different propagators on the orthogonal non-uniform mesh and Figure \ref{fig:magneticfield2} shows the same quantity on the Colella grid. In both cases, a 1D reference run with an explicit Hamiltonian splitting on Cartesian coordinates is given for comparison using the 1D Weibel distribution from \cite{kraus2016gempic} with $\beta=10^{-3}$.

Next, we set the wave vector to $\mathbf{k}=(1.25,1.25,1.25)^\top$ and $\alpha=0.1$ so that we have a perturbation in every $x$-component. 
For the numerical resolution, we take 8 grid cells in every direction and spline degrees $(3,3,3)$. The other parameters remain unchanged.

Now, we look at runs of the Hamiltonian splitting for different distortion parameters $\epsilon$ of the transformation. We take $\epsilon=0$ as a reference and go from $\epsilon=0.01$ up to $\epsilon=0.1$ to study the effect of the coordinate transformation. The time step is taken as $\Delta t=0.01$ to obey the CFL-condition for all choices of $\epsilon$. The initial distribution is sampled in logical coordinates. Hence, the number of particles per cell is approximately constant. The larger the distortion of the grid, the larger cells appear and parts of the domain become more and more underresolved and the quality of the solution decreases. Note that the coordinate transformations which we consider here are artificial with the goal to validate our method. The situation is different when problem-specific coordinate transformations are used in realistic scenarios. 
In Figure \ref{fig:magneticfield3}, we see that for decreasing $\epsilon$ the magnetic field growth rate converges to the scaling case with $\epsilon=0$ which coincides with the run without transformation.

\begin{figure}
\centering
 \begin{subfigure}{0.47\textwidth}
  \centering
  \includegraphics[width=\linewidth]{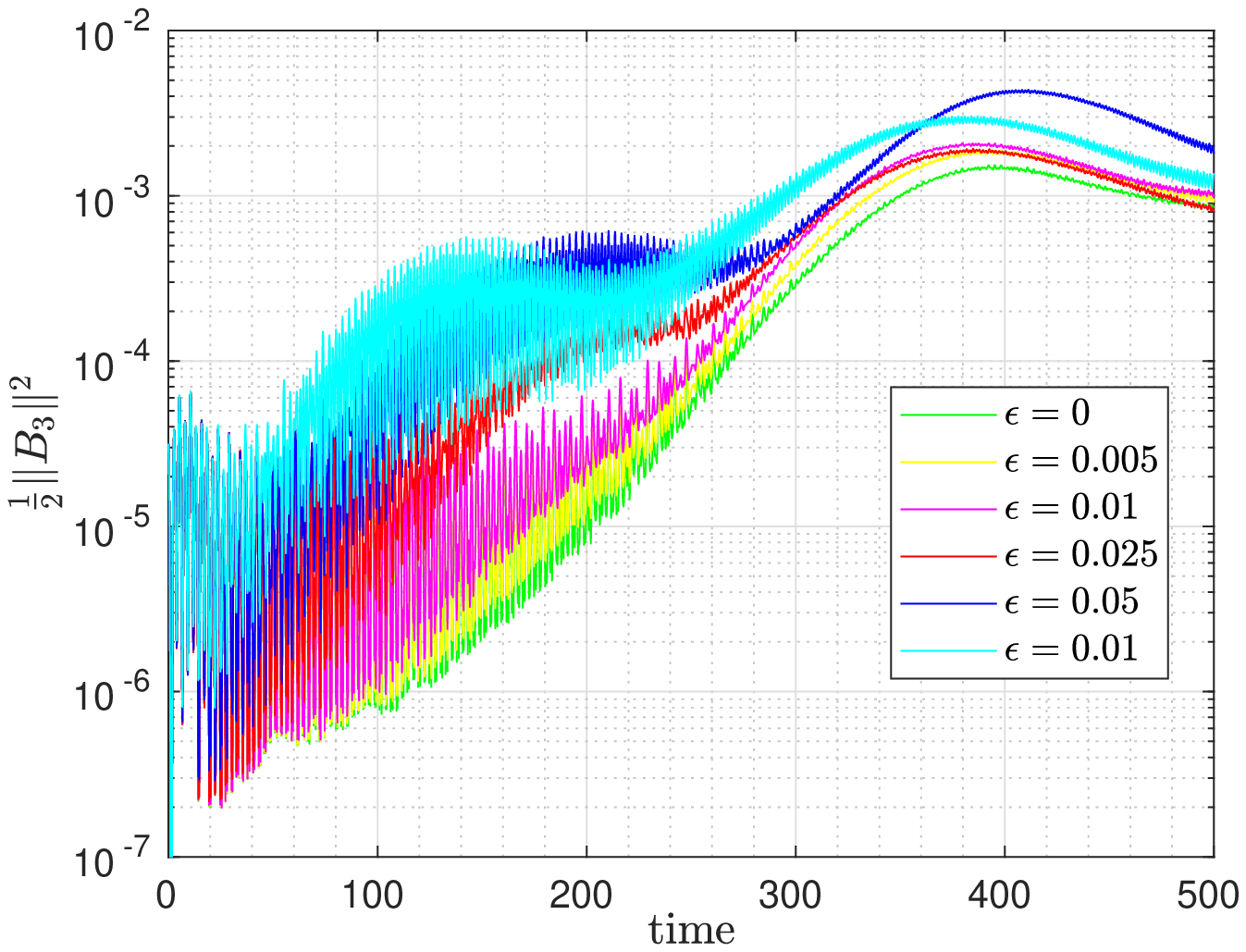}
\caption{Magnetic field energy}
\label{fig:magneticfield3}
 \end{subfigure}
 \hfill
 \begin{subfigure}{0.47\textwidth}
  \centering
  \includegraphics[width=\linewidth]{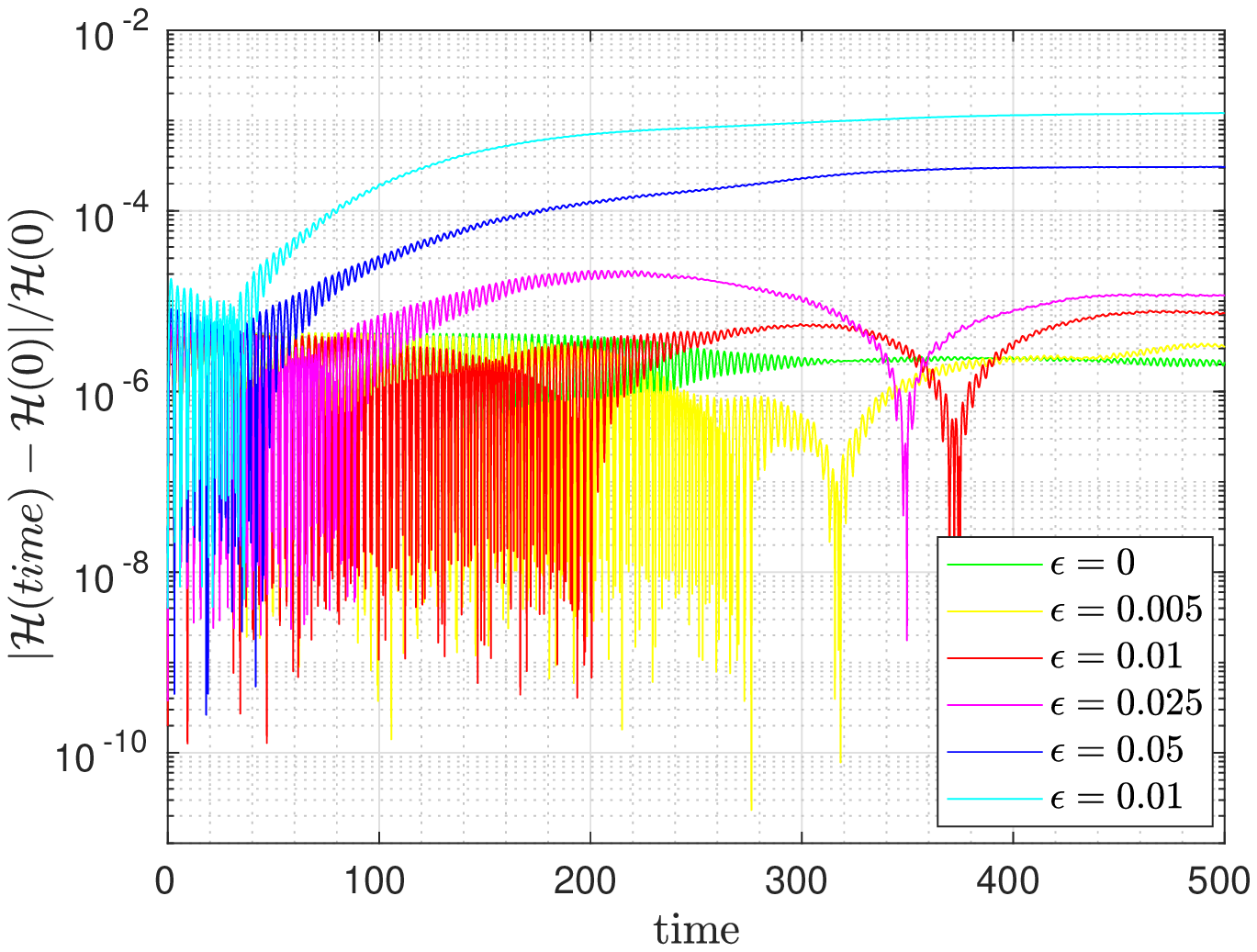}
\caption{Error in total energy}
\label{fig:hsenergy}
 \end{subfigure}
 \caption{Weibel instability on Colella mesh: Hamiltonian splitting with $\Delta t=0.01$ and different values of the distortion parameter $\epsilon$.}
\end{figure}

Now, we look at the conservation properties of our propagators.
Therefore, the time step is set to $\Delta t=0.05$ and the distortion parameter $\epsilon$ to $0.05$ so that all methods run stable. For the fully implicit step in DisGradEC we need on average 4 iterations per time step on the Cartesian grid, 7 on the Orthogonal non-uniform mesh and 8 on the Colella mesh.

\begin{table}[htbp]
\centering
\caption{Weibel instability: Maximum error in Gauss' law and the total energy until time 500 for various integrators with $\Delta t
=0.05$ and $\epsilon=0.05$.}
\begin{tabular}{|c||c||c|c|c|}
\hline
&Method & Cartesian & Orthogonal & Colella  \\
\hline
\multirow{3}{*}{Gauss} &HS & $1.9 \cdot 10^{-11}$ & $5.9 \cdot 10^{-10}$ & $6.8 \cdot 10^{-10}$ \\
\cline{2-5}
&DisGradE & $1.1 \cdot 10^{-6}$ & $ 1.7 \cdot 10^{-6}$&  $ 1.6 \cdot 10^{-6}$\\
\cline{2-5}
&DisGradEC & $3.8 \cdot 10^{-13}$ & $6.4 \cdot 10^{-10}$ & $ 8.6\cdot 10^{-10}$ \\
\hline
\multirow{3}{*} {Energy} &HS  & $1.1 \cdot 10^{-4}$ & $1.8 \cdot 10^{-4}$ & $1.6 \cdot 10^{-3}$\\
\cline{2-5}  
&DisGradE & $3.2 \cdot 10^{-10}$ & $ 1.4 \cdot 10^{-10}$ & $ 4.2 \cdot 10^{-10}$\\
\cline{2-5}
&DisGradEC & $6.0\cdot 10^{-12}$& $1.6 \cdot 10^{-10}$ & $ 4.2\cdot 10^{-10}$\\
\hline
\end{tabular}
\label{table:1}
\end{table}
In Table \ref{table:1}, we see the difference between the energy and the charge conserving methods. As expected, the discrete gradient methods  (DisGradE, DisGradCE) conserve the total energy whereas for the Hamiltonian splitting method (HS) the energy is not conserved but the error is bounded as can be seen in Figure \ref{fig:hsenergy}. 
As proven in Section \ref{sec:structure}, the charge conserving discrete gradient method (DisGradCE) and the Hamiltonian splitting method conserve Gauss' law. 
Note that all the conservations are up to the tolerance of the solver times the condition number of the mass matrices.

\subsection{Strong Landau damping}
We also study the electrostatic Landau damping with initial distribution
\begin{align*}
f(\xb,\vb,t=0)= \left(1+\alpha \cos( \mathbf{k} \cdot \xb)\right)\frac{1}{(2\pi)^\frac{3}{2} v_{th1}^3} \exp \left( -\frac{1}{2} \left( \frac{\vb^2}{v_{th1}^2}\right) \right ),  \xb \in [0,L]^3, \vb \in \R^3.
\end{align*}
The magnetic field is set to zero and for the electrostatic setting Farady's equation is excluded. The electric field at initial time, $\Eb(\xb,t=0)$, is calculated from Poisson's equation.
We choose the parameter as $v_{th1}=1, \mathbf{k}=(0.5,0,0)^\top, \alpha=0.5, L=\frac{2 \pi}{0.5}$.
For the numerical resolution, we take $1,600,000$ particles, $32\times4\times2$ grid cells, spline degrees $(3,2,1)$, time step of $\Delta t=0.05$ and for the  iterative solver a tolerance of $10^{-13}$, for the non-linear iteration in DisGradEC a tolerance of $10^{-12}$.
These parameters extend the 1D2V settings in \cite{kraus2016gempic} to the 3D3V phase space.
\begin{figure}
\includegraphics[width=0.5\textwidth]{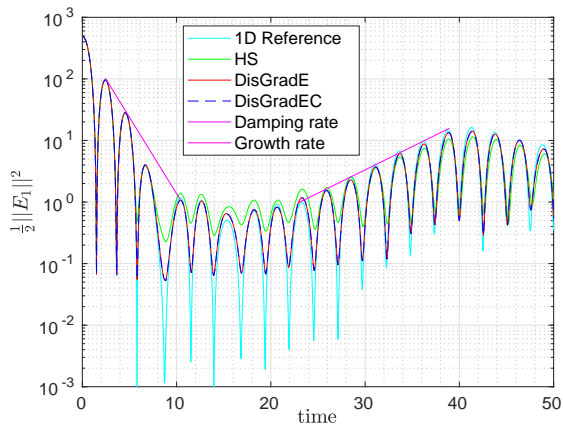}
\centering
\caption{Landau Damping on Colella mesh: First component of electric energy with time step $\Delta t=0.05$ and distortion parameter $\epsilon=0.1$ for the transformation.}
\label{fig:landauE1}
\end{figure}

Figure \ref{fig:landauE1} shows the electric energy for various integrators on the Colella mesh together with a 1D reference run on the Cartesian grid. All propagators yield similar results which fit with the damping and growth rate obtained from the 1D test case. However, the symplectic Hamiltonian splitting shows worse results than the implicit schemes.

\begin{table}[htbp]
\centering
\caption{Landau Damping: Maximum error in Gauss' law and the total energy until time 500 for various integrators with $\Delta t
=0.05$ and $\epsilon=0.1$.}
\begin{tabular}{|c||c|c||c|c|}
\hline
&\multicolumn{2}{|c||} {Gauss} & \multicolumn{2}{|c|} {Energy}\\
\hline
Method & Cartesian & Colella & Cartesian & Colella\\
\hline
HS & $3.1\cdot 10^{-13}$ &  $3.1\cdot 10^{-11}$& $1.0 \cdot 10^{-4}$ & $10.0 \cdot 10^{-3}$\\
\hline
DisGradE & $7.4\cdot 10^{-3}$ & $2.1\cdot 10^{-2}$ & $3.1 \cdot 10^{-14}$ & $2.7 \cdot 10^{-14}$\\
\hline
DisGradEC & $2.5 \cdot 10^{-13}$ & $3.5 \cdot 10^{-11}$ & $1.4 \cdot 10^{-13}$ &  $1.3\cdot 10^{-14}$\\
\hline
\end{tabular}
\label{table:2}
\end{table}
From Table \ref{table:2} it becomes obvious that the constructed conservation properties are satisfied numerically.

\section{Conclusions}\label{sec:conclusions}
We have derived a geometric particle-in-cell method in curvilinear geometry based on a discretisation of the fields with finite element exterior calculus and a hybrid particle pusher. Our formulation yields a semi-discrete Poisson system that satisfies the Jacobi identity. For the discretisation in time, we have considered both a variational integrator as well as energy-conserving time stepping schemes based on the discrete gradient method and an antisymmetric splitting of the Poisson matrix. 

In order to investigate the influence of the coordinate transformation, we have restricted ourselves to test problems with periodic boundaries, where exact conservation is achieved. However, in more realistic scenarios real boundary conditions apply and the energy balance at the boundaries has to be considered. This  will be the topic of forthcoming work.

\section{Acknowledgements}
The authors thank Douglas Arnold, Elena Celledoni, Roman Hatzky, Philip J. Morrison, Nils Mosch{\"u}ring, Brynjulf Owren and Hendrik Speleers for helpful discussions.

\end{document}